\documentclass[11pt]{amsart}
   \usepackage[margin=3.7cm]{geometry}

\numberwithin{equation}{section}
\usepackage{amsmath,amsfonts,amsthm,amssymb,amscd,mathabx, verbatim,graphicx,color,multirow,booktabs, caption,tikz,tikz-cd, mathdots,bm}
\usepackage{tikz-cd}
 \usepackage[pagebackref]{hyperref} 
\usetikzlibrary{positioning}

\newtheorem{theorem}{Theorem}[section]
\newtheorem{lemma}[theorem]{Lemma}

\newtheorem{proposition}[theorem]{Proposition}
 \newtheorem{corollary}[theorem]{Corollary}

      \newtheorem{conjecture}{Conjecture}

      \theoremstyle{definition}
     
     \newtheorem{example}[theorem]{Example}
     
     \theoremstyle{remark}
     \newtheorem{remark}[theorem]{Remark}

\newcommand{\Syl}{\mathop{\mathrm{Syl}}}

 \definecolor{mycolor}{rgb}{0.55,0.0,0.16}
  \definecolor{myred}{rgb}{0.75,0.0,0.16} 
  \definecolor{mygreen}{rgb}{0.0,0.4,0.16} 
  \definecolor{myviolet}{rgb}{1,0,1} 
   \definecolor{mypink}{rgb}{0.67,0,0.47}

\hypersetup{
colorlinks=true,
linkcolor=true,
linktocpage=true,
pageanchor=true,
hyperindex=true
}

 \AtBeginDocument{
     \hypersetup{
  linkcolor=mycolor,
  urlcolor=mypink,
citecolor=mygreen
}
     }

\makeatletter
\@namedef{subjclassname@2020}{%
  \textup{2020} Mathematics Subject Classification}
\makeatother

\subjclass[2020]{Primary: 20D20, 20Fxx, 20D05}
\keywords{Sylow subgroups, nilpotent subgroups, simple groups, synchronization.} 

\author[Hong Yi Huang]{Hong Yi Huang}
\address{\parbox{\linewidth}{Hong Yi Huang, Alfréd Rényi Institute of Mathematics, Budapest, Reáltanoda u. 13-15, H-1053, Hungary}}
\email{11612012@mail.sustech.edu.cn} 

\author[Francesca Lisi]{Francesca Lisi}
\address{\parbox{\linewidth}{Francesca Lisi}}
\email{francesca.lisi@postecert.it}

\author[Aluna Rizzoli]{Aluna Rizzoli}
\address{\parbox{\linewidth}{Aluna Rizzoli, Department of Mathematics, King's College London, Strand, London WC2R 2LS, United Kingdom}}
\address{\parbox{\linewidth}{Heilbronn Institute for Mathematical Research, Bristol, United Kingdom}}
\email{aluna.rizzoli@kcl.ac.uk} 

\author[Luca Sabatini]{Luca Sabatini}
\address{\parbox{\linewidth}{Luca Sabatini, Mathematics Institute, University of Warwick}}
\email{luca.sabatini@warwick.ac.uk, sabatini.math@gmail.com} 

 \title[Sylow synchronization in finite groups]{Sylow synchronization in finite groups:\\the good case} 

\begin{document} 

\maketitle 

\begin{abstract} 
Let $G$ be a finite solvable group such that,
for any prime $p$ and any quotient $Q$ of $G$,
there are two Sylow $p$-subgroups of $Q$ intersecting in $O_p(Q)$.
Then, for every family $(P_i)_{i=1}^n$ of Sylow subgroups of $G$ for distinct primes $p_1,\ldots,p_n$, there exists $x \in G$ such that $P_i \cap P_i^x = O_{p_i}(G)$ for all $i$.
This covers groups of odd order,
partially settling a conjecture of the second and fourth authors
and unifying old results of Bialostocki and Mann on the intersection of nilpotent subgroups.

We also prove a result for all symmetric and alternating groups,
completing the proof of the conjecture for simple groups
as initiated by Burness and the first author.
\end{abstract}


\vspace{0.2cm}
\section{Introduction} \label{sec1}

In \cite{LS26}, the second and fourth authors proposed
the following ``synchronization conjecture'' for Sylow intersections,
which also appears as Question~21.26 in the Kourovka Notebook \cite{Kou26}: 

\begin{conjecture}[\cite{LS26}] \label{conjMain}
Let $G$ be a finite group and let $(P_i)_{i=1}^n$ be Sylow subgroups for distinct primes.
Then there exists $x \in G$ such that $P_i \cap P_i^x$ is inclusion-minimal in $\{ P_i \cap P_i^g : g \in G\}$,
for all $i=1,\ldots,n$.
\end{conjecture}

This was motivated by problems concerning the intersection of nilpotent subgroups.
In \cite{LS26}, Conjecture~\ref{conjMain} was proved
for large alternating groups and for metanilpotent groups of odd order.
In \cite{BH26}, Burness and the first author combined probabilistic and computational ideas
to prove Conjecture~\ref{conjMain} for all non-alternating simple groups,
obtaining, as a consequence,
the proof of a striking conjecture of Vdovin \cite[Question~15.40]{Kou26}.

For a prime $p$, the $p$-core $O_p(G)$ is the intersection of all the Sylow $p$-subgroups of $G$.
Following \cite{LS26}, we say that a finite group $G$ has $(*)_p$
if there exist two Sylow $p$-subgroups intersecting in $O_p(G)$,
and that $G$ has $(*)$ if it has $(*)_p$ for all $p$.
Since $(*)_p$ does not pass to quotients in general (see Example~\ref{ex:Q}),
it is natural to refer to the ``good case''
as to the situation where all quotients of $G$ have $(*)$;
this is in contrast to \cite[Conjecture~B]{LS26}, where $(*)$ was required only for $G$.
This property removes the burden of dealing with annoying minimal intersections.

The main contribution of this paper is to handle the good case
under the further assumption that the group is solvable.

\begin{theorem} \label{thMain}
Let $G$ be a finite solvable group such that every quotient of $G$ has $(*)$.
Then Conjecture~\ref{conjMain} holds for $G$.
\end{theorem}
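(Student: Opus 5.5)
We argue by induction on $|G|$. Fix Sylow subgroups $P_1,\dots,P_n$ of $G$ for distinct primes $p_1,\dots,p_n$. Since every quotient of $G$ has $(*)$, the minimal intersections $P_i\cap P_i^g$ are exactly those equal to $O_{p_i}(G)$. So our goal is to find a single $x\in G$ with $P_i\cap P_i^x=O_{p_i}(G)$ for all $i$.

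First we reduce to the case $\Phi(G)=1$ and $O_{p_i}(G)$ trivial. If $N=O_{p_i}(G)\neq 1$ for some $i$, pass to $\bar G=G/N$. The hypothesis is inherited by quotients, and $O_{p_i}(\bar G)=1$. A solution $\bar x$ for the images $\bar P_j$ lifts to $x$ with $P_i\cap P_i^x=N$. For $j\neq i$ the map $P_j\to \bar P_j$ is injective, so $(P_j\cap P_j^x)N/N\le \bar P_j\cap\bar P_j^{\bar x}=O_{p_j}(\bar G)$. It remains to check that $O_{p_j}(\bar G)$ pulls back to $O_{p_j}(G)\times N$ up to the coprime action. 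This follows because the preimage $M$ of $O_{p_j}(\bar G)$ is $N\rtimes R$ with $R$ a $p_j$-group, and $P_j\cap M$ is normal in $P_j$ with $P_j\cap P_j^x\cap M$ contained in $R^{y}$ for various $y$. I expect some Frattini/coprime bookkeeping here, but it should be routine.

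Hence we may assume $O_{p_i}(G)=1$ for all $i$, and we need $x$ with $P_i\cap P_i^x=1$ for all $i$. Let $V$ be a minimal normal subgroup, elementary abelian of order $r^k$. By the above, $r\notin\{p_1,\dots,p_n\}$. Either $r$ is none of the $p_i$, or, if $V\le\Phi(G)$, we can pass to $G/V$, since $O_p(G/\Phi)$ relates well to $O_p(G)$. So we may assume that $F(G)=\mathrm{Fit}(G)$ is a direct product of minimal normal subgroups, complemented by some $H$, and that $F(G)$ is an $\{p_1,\dots,p_n\}'$-group. Then each $P_i$ is conjugate into $H$ and acts faithfully on $F=F(G)$, because $C_G(F)=F$.

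The key step is now a counting argument on the module $F$. Take $x=v\in F$. Then $P_i\cap P_i^v=C_{P_i}(v)$-type intersections: indeed, for $g\in P_i$, $g\in P_i^v$ iff $[g,v^{-1}]\in P_i\cap F=1$, so $P_i\cap P_i^v=C_{P_i}(v)$. Therefore we need a vector $v\in F$ with $C_{P_i}(v)=1$ for all $i$ simultaneously, i.e. a common regular orbit of all the $P_i$ on $F$. The hypothesis $(*)$ for $G$ and its quotients tells us that each $P_i$ individually has a regular orbit: if $P_i\cap P_i^g=1$ with $g=hv$, we can reduce to such $v$. The obstacle is synchronizing these. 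I would decompose $F=\bigoplus V_j$ into minimal normal subgroups, and note that $v=(v_j)$ satisfies $C_{P_i}(v)=\bigcap_j C_{P_i}(v_j)$. For each $i$, the set $B_i$ of "bad" $v$ with $C_{P_i}(v)\ne1$ is a union of fixed spaces $C_F(z)$ over elements $z\in P_i$ of order $p_i$. I would then show, using coprimality of $p_i$ and $r$ together with the existence of a regular orbit, that $|B_i|<|F|/n$ fails only in small cases. A cleaner route is to argue inductively, coordinate by coordinate: choose $v_1$ regular for the kernels, and then use the Hall subgroup $P_1\cdots P_n$ (which, in a solvable group, we may assume is a Hall $\pi$-subgroup $K$ containing all $P_i$, after conjugating the $P_i$ — this is harmless since replacing $P_i$ by $P_i^{g_i}$ with $v$ adjusted changes the condition only by conjugation). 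Coprime action of $K$ on $F$ means $F$ is a completely reducible $K$-module. The statement is then that $K$, acting faithfully and coprimely, has a vector whose stabilizer contains no nontrivial $p_i$-element, for each Sylow of $K$. I expect this to be the main obstacle, and I would attempt it by combining the individual regular orbits given by $(*)_{p_i}$ on each homogeneous component with a product/induction argument over the direct factors.
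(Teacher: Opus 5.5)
\textbf{There is a genuine gap in the reduction.} The problem is your lifting step from $G/N$ with $N=O_{p_i}(G)$. You call it routine bookkeeping, but it carries the entire content of the theorem. The preimage of $O_{p_j}(G/N)$ is $NR$, where $R$ is a $p_j$-group that may act nontrivially on $N$, whereas $O_{p_j}(G)\cap NR=O_{p_j}(NR)=C_R(N)$. So an arbitrary lift $x$ of $\bar x$ can leave $P_j\cap P_j^x$ outside $O_{p_j}(G)$. Already for $G=S_3$ and $N=O_3(G)$, every $\bar x\in G/N\cong C_2$ solves the quotient problem, yet the lift $x=1$ gives $P_2\cap P_2^x=P_2\neq 1=O_2(G)$. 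The lift has to be chosen carefully inside the coset; here, for instance, any $x\in N\setminus\{1\}$ works.

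A sanity check confirms this. You may always enlarge the family so that it contains a Sylow subgroup for every prime in $\pi(G)$. Your reduction, if valid, would then bring you down to $F(G)=1$, i.e.\ to the trivial group.

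The paper makes essentially your first move. It reduces to $\Phi(G)=1$, takes $A=O_{p_1}(G)\neq 1$, and uses induction to get a coset $xA$ with $P_iA\cap P_i^xA\subseteq H$, where $H/A=F(G/A)$. It then spends all its effort choosing $a\in A$:
\begin{itemize}
\item With $Q_i=P_i\cap H$, the Frattini argument gives $t_i\in A$ with $Q_i^x=Q_i^{t_i}$.
\item So one needs $C_{\overline{Q}_i}(a+t_i)=1$ simultaneously for all $i$, in the nilpotent linear group $\overline H=H/C_H(A)$ acting on $A$, with translations depending on $i$.
\item Individual regular orbits come from Lemma~\ref{lem:normal} applied to $AQ_i\unlhd G$, together with Lemma~\ref{lemIntCen}.
\item Proposition~\ref{prop:translated} synchronizes these orbits, and $C_H(A)\le F(G)$ finishes the proof.
\end{itemize}

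\textbf{Your second half does not fill the hole.} Conjugating the $P_i$ into a common Hall subgroup $K$ by different elements $g_i$ is not harmless. A solution $y$ for the family $(P_i^{g_i})$ only gives $P_i\cap P_i^{g_iyg_i^{-1}}=O_{p_i}(G)$, with a different conjugating element for each $i$. That loses exactly the synchronization the conjecture asks for. It would be harmless if all $g_i$ lay in the abelian group $F$, but moving $P_i$ into $K\le H$ generally needs elements of $H$.

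Your claim that $(*)$ gives each $P_i$ a regular orbit on $F$ is also unjustified. The subgroup $FP_i$ need not be normal in $G$. Moreover, from $P_i\cap P_i^{hv}=1$ with $P_i\le H$ you only get $C_{P_i\cap P_i^h}(v)=1$, not $C_{P_i}(v)=1$.

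Finally, you leave the common regular orbit for all Sylow subgroups of the non-nilpotent group $K$ as an expectation, not a proof. The paper avoids this in two ways. It works only with the nilpotent group $H/C_H(A)$, where the tensor decomposition in Proposition~\ref{prop:translated} applies. It also lets the translations $t_i$ absorb the failure of $x$ to normalize the $Q_i$.
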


By the Feit--Thompson theorem \cite{FT63} and a result of It\^o \cite{Ito58},
the groups of odd order satisfy the hypotheses of Theorem~\ref{thMain}.
Hence we have

\begin{corollary} \label{corOdd}
Conjecture~\ref{conjMain} holds for groups of odd order.
\end{corollary}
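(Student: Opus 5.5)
The corollary should follow directly from Theorem~\ref{thMain}: we check its two hypotheses for a group $G$ of odd order, and then check that its conclusion gives the inclusion-minimality required in Conjecture~\ref{conjMain}.

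\emph{Solvability.} By the Feit--Thompson theorem \cite{FT63}, every group of odd order is solvable, so $G$ is solvable.

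\emph{Every quotient has $(*)$.} If $N \trianglelefteq G$, then $|G/N|$ divides $|G|$, so $G/N$ again has odd order. It therefore suffices to show that every group $Q$ of odd order has $(*)_p$ for every prime $p$. This is the content of It\^o's theorem \cite{Ito58}: in a group of odd order, for each prime $p$ there exist two Sylow $p$-subgroups whose intersection is exactly $O_p(Q)$. For primes $p$ not dividing $|Q|$ the property is trivial, since the Sylow $p$-subgroup is $1 = O_p(Q)$. Hence every quotient of $G$ has $(*)$, and Theorem~\ref{thMain} applies to $G$.

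\emph{Conclusion.} Let $(P_i)_{i=1}^n$ be Sylow subgroups of $G$ for distinct primes $p_1,\dots,p_n$. As stated in the abstract, Theorem~\ref{thMain} provides $x \in G$ with $P_i \cap P_i^x = O_{p_i}(G)$ for all $i$. We must check that this intersection is inclusion-minimal in $\{P_i \cap P_i^g : g \in G\}$. Since $O_{p_i}(G)$ is the intersection of all Sylow $p_i$-subgroups of $G$, it is contained in $P_i \cap P_i^g$ for every $g \in G$. Thus $O_{p_i}(G)$ is in fact the minimum of this family, and in particular it is inclusion-minimal. So $x$ witnesses Conjecture~\ref{conjMain} for $G$.

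There is no real obstacle here. The substantive input is It\^o's theorem, which is quoted, together with the elementary facts that odd order passes to quotients and that $O_p(G)$ lies in every Sylow intersection. The genuine difficulty lies entirely in the proof of Theorem~\ref{thMain} itself.
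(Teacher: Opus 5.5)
Your proposal is correct and follows the paper's route: Feit--Thompson gives solvability, It\^o's theorem applied to each quotient (which again has odd order) gives $(*)$ for all quotients, and Theorem~\ref{thMain} then applies. Your remark that $O_{p_i}(G)\subseteq P_i\cap P_i^g$ for every $g$ makes explicit the step the paper leaves implicit, namely that $P_i\cap P_i^x=O_{p_i}(G)$ is the minimum of the family and hence inclusion-minimal.
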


This was reported as an important open case in both \cite{LS26} and \cite{BH26}.
As a consequence, we also obtain a positive solution to \cite[Conjecture~C]{LS26}.
We write $F(G)$ for the Fitting subgroup of $G$.

\begin{corollary} \label{corBiaMan}
Let $G$ be a group of odd order and let $H$ be a nilpotent subgroup of $G$.
Then there exists $x \in G$ such that $H \cap H^x \subseteq F(G)$.
\end{corollary}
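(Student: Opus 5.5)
We deduce Corollary~\ref{corBiaMan} from Corollary~\ref{corOdd}, using the version of Conjecture~\ref{conjMain} given by Theorem~\ref{thMain}: for groups in which every quotient has $(*)$, the minimal intersections are the cores $O_{p_i}(G)$. The statement of the abstract confirms this form.

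Let $G$ have odd order and let $H$ be a nilpotent subgroup. Write $H = H_1 \times \cdots \times H_n$, where $H_i$ is the Sylow $p_i$-subgroup of $H$ and $p_1,\ldots,p_n$ are the distinct primes dividing $|H|$. For each $i$ choose a Sylow $p_i$-subgroup $P_i$ of $G$ containing $H_i$. By the Feit--Thompson theorem $G$ is solvable. By It\^o's result every quotient of $G$ (again of odd order) has $(*)$. Hence Theorem~\ref{thMain}, in the sharp form stated in the abstract, gives $x\in G$ with $P_i\cap P_i^x = O_{p_i}(G)$ for all $i$.

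Now $H\cap H^x$ is nilpotent, and its Sylow $p_i$-subgroup is $H_i\cap H_i^x$. This holds because in a nilpotent group the Sylow $p$-subgroup is the set of $p$-elements, and $H^x = \prod_i H_i^x$. Therefore
\[
H_i\cap H_i^x \subseteq P_i\cap P_i^x = O_{p_i}(G).
\]
Since $H\cap H^x$ is generated by its Sylow subgroups $H_i\cap H_i^x$, we get
\[
H\cap H^x \subseteq \prod_i O_{p_i}(G) \subseteq F(G),
\]
as $F(G)=\prod_p O_p(G)$.

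The only delicate point is to verify that the minimal intersection produced by Theorem~\ref{thMain} really equals $O_{p_i}(G)$, rather than merely being inclusion-minimal. Minimality alone would not suffice: $O_{p_i}(G)$ is contained in every $P_i\cap P_i^g$, so it is the minimum, but an inclusion-minimal element need not equal it. We close this gap using $(*)_{p_i}$ for $G$ itself, which holds by It\^o. It gives some $g$ with $P_i\cap P_i^g=O_{p_i}(G)$. Any minimal element $P_i\cap P_i^x$ contains $O_{p_i}(G)$, and we must show it equals it. The abstract's formulation asserts the equality directly, so we rely on that formulation of Theorem~\ref{thMain}.
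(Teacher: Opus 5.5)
Your proof is correct and is the deduction the paper intends (it gives no separate proof). You embed the Sylow subgroups $H_i$ of $H$ in Sylow subgroups $P_i$ of $G$ and apply Theorem~\ref{thMain}, whose proof does produce $P_i\cap P_i^x=O_{p_i}(G)$, to obtain $H\cap H^x=\prod_i (H_i\cap H_i^x)\subseteq\prod_i O_{p_i}(G)\subseteq F(G)$.

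Your final paragraph, however, raises a difficulty that does not exist, and your claim that minimality alone would not suffice is wrong. $O_{p_i}(G)$ lies in every $P_i\cap P_i^g$, and by $(*)_{p_i}$ (It\^o) it equals one of them. So it is the least element of the family, and any inclusion-minimal member must equal it. You therefore do not need to rely on the abstract's wording.
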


The case where $H$ is a Hall subgroup was proven by Bialostocki \cite[Theorem~B.4]{Bia75},
while the case where $H$ contains $F(G)$ was proven by Mann \cite[page~263]{Man75}.
In fact, it was the search for a proof of Corollary~\ref{corBiaMan}
that originally led to state Conjecture~\ref{conjMain}.

In brief, the proof of Theorem~\ref{thMain} works as follows.
Choose a prime $p$ such that $O_p(G) \neq 1$, let $A=O_p(G)$ and $H/A = F(G/A)$.
We handle $G/A$ by induction, obtaining a coset $xA$ which allows us to enter in $H/A$.
Up to quotienting by the Frattini subgroup, we can look at $A$ as an $H/A$-module.
We then apply a strong synchronization result
for nilpotent linear groups (Proposition~\ref{prop:translated}),
which provides an element
$a \in A$ that deals with the primes different from $p$.
Since $H/A$ is a $p'$-group and $a$ fixes the Sylow $p$-subgroups by conjugation,
we will use the product $xa$ as the desired element dealing with all primes simultaneously.

In Section~\ref{sec3}, we make some effort to settle the case of small symmetric and alternating groups, as prompted in \cite[Remark~7]{BH26}.
Combining this result with \cite[Theorem~A]{BH26}, we obtain

\begin{theorem} \label{thSimple}
Conjecture~\ref{conjMain} holds for finite simple groups.
\end{theorem}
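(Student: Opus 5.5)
Theorem~\ref{thSimple} is obtained by combining \cite[Theorem~A]{BH26}, which settles Conjecture~\ref{conjMain} for all non-alternating finite simple groups, with the alternating groups. For the alternating groups, \cite{LS26} already handles $A_m$ for all sufficiently large $m$. So the plan reduces to handling $A_m$ for $5 \le m \le m_0$, where $m_0$ is the explicit threshold coming out of \cite{LS26}; I will also treat $S_m$ in parallel, as announced in the abstract. Cyclic simple groups are trivial, since there every Sylow subgroup is normal and $x=1$ works.

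For the small cases I would argue as follows. Since $G=A_m$ is simple and nonabelian, $O_p(G)=1$ for every prime $p$. It is classical that $A_m$ and $S_m$ have $(*)_p$ for every $p$, and in fact that two Sylow $p$-subgroups intersecting trivially exist. Hence the minimal elements of $\{P\cap P^g\}$ are exactly the trivial intersection, and it suffices to find one $x$ with $P_i\cap P_i^x=1$ for all $i$ simultaneously.

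\begin{itemize}
\item \textbf{Probabilistic step.} Let $q_p$ be the probability that a uniformly random $x\in G$ satisfies $P\cap P^x\neq 1$, for $P\in\Syl_p(G)$. By a union bound it suffices that $\sum_{p\mid |G|} q_p<1$. Conjugating the family $(P_i)$ simultaneously does not change the problem, and $q_p$ does not depend on the choice of $P$, so this bound handles every family at once.
\item \textbf{Bounding $q_p$.} I would bound $q_p$ by the number of $x$ for which $P^x$ contains some element of order $p$ of $P$. This gives
\[
q_p\le \sum_{y} \frac{|y^G\cap P|}{|y^G|},
\]
where $y$ runs over representatives of the $G$-classes of elements of order $p$. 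These quantities can be computed from cycle-type counts in Sylow subgroups, which are iterated wreath products.
\item \textbf{Large primes.} For $p>m/2$ the Sylow $p$-subgroup is cyclic of order $p$, so the contribution is tiny.
\end{itemize}

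The main obstacle is the small primes, especially $p=2$ and $p=3$, for small $m$. There $q_2$ may be close to $1$, and the union bound fails. For these I would instead run an exact computation in \textsc{Magma} or GAP. Fix representatives $P_i$; conjugating the whole family by a common element changes nothing, but the relative position of the $P_i$ does matter. So I enumerate the tuples up to simultaneous conjugacy, i.e.\ orbits of $G$ on $\prod_i \Syl_{p_i}(G)$, or equivalently double cosets of the $N_G(P_i)$. For each tuple I search for a common $x$ with $P_i\cap P_i^x=1$ for all $i$. One can restrict to the primes $p\le m$, and to tuples including $p=2$, since otherwise the union bound already suffices.

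To keep the computation feasible for $m$ up to $m_0$, I would first apply the union bound to all primes except $2$ and $3$, obtaining a lower bound on the proportion of $x$ that are good for those primes. Then it suffices to show that the set of $x$ good for $P_2$ and $P_3$ jointly has proportion exceeding the failure probability of the rest. This joint proportion can be computed via double cosets $N_G(P_2)\backslash G/N_G(P_2)$ and the corresponding ones for $P_3$. If $m_0$ from \cite{LS26} is too large for this, I would first sharpen the probabilistic estimate in \cite{LS26}, e.g.\ using Liebeck–Shalev-type fixed point ratio bounds, to lower $m_0$ to a computable range.
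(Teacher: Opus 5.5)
\textbf{Verdict: there is a genuine gap.} Your reduction to the alternating groups via \cite[Theorem~A]{BH26} is right, but the proposal defers the actual content of the proof. You hand every $m>m_0$ to an ``explicit threshold'' from \cite{LS26}, and there is no usable one to cite. The paper notes that the asymptotic estimates for large $n$ \cite{DGG+26,Ebe26} are ineffective for small $n$, and that is exactly why \cite[Remark~7]{BH26} left the small alternating groups open. Your fallback (``sharpen the estimate to lower $m_0$'') is therefore not a contingency; it is the whole theorem.

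The missing idea is a bound on $|y^G\cap P|$ that is explicit and uniform in $n$. For $y$ of cycle type $[p^j,1^{n-pj}]$ the paper proves
\[
a(n,p,j)\le (p-1)^j\binom{\lfloor n/p\rfloor+j-1}{j}.
\]
This comes from the recursion $F_r=F_{r-1}^p+(p/(p-1))^{m-1}X^m$ for the iterated wreath product, together with the coefficientwise domination $F_r\le(1-X)^{-m}$, where $m=p^{r-1}$. The result is an explicit majorant $M(n,p)$ of $\widehat{Q}_p(S_n)$ with these properties:
\begin{itemize}
\item $M(n,p)\le M(n_0,p)$ whenever $p\mid n_0\le n$;
\item $M(n,p)\le M(p,p)\le 2^{-p}$ for $p\ge 11$.
\end{itemize}
Together these give $\sum_p\widehat{Q}_p(S_n)<1$ for all $n>40$. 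Passing to $A_n$ costs a factor $4$ (from $|x^{S_n}|\le 2|x^{A_n}|$), which would wreck the bound $\widehat{Q}_2(S_n)<3/4$. One has to restrict to even involutions, where $4M_{\mathrm{ev}}(40)<3/4$. Only then is the leftover range $n\le 40$ small enough for a machine check.

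On that check, two problems with your plan:
\begin{itemize}
\item Enumerating orbits on $\prod_i\Syl_{p_i}(G)$ is hopeless for $m$ near $40$.
\item The joint $\{2,3\}$-proportion depends on the relative position of $P_2$ and $P_3$, that is, on $N_G(P_2)\backslash G/N_G(P_3)$. It is not determined by the two separate double-coset decompositions.
\end{itemize}

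\textbf{The inequality you would compute with is also wrong.} The union bound runs over all elements $y\in P$ of order $p$, not over class representatives. Each such $y$ contributes $\Pr_x(y\in P^x)=|y^G\cap P|/|y^G|$, and grouping by classes gives the Liebeck--Shalev quantity $\sum_C|C\cap P|^2/|C|$, not $\sum_C|C\cap P|/|C|$.

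The unsquared version is false. Take $S_8$ with $p=2$, whose Sylow subgroup is $D_8\wr C_2$:
\begin{itemize}
\item it contains $4,10,12,17$ involutions of cycle types $[2,1^6],[2^2,1^4],[2^3,1^2],[2^4]$;
\item these classes have sizes $28,210,420,105$;
\item so your sum equals $8/21$.
\end{itemize}
Yet $q_2(S_8)=1$, because $S_8$ fails $(*)_2$ \cite{MZ96}. Any numerical verification based on your formula would therefore be unsound.

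The same example refutes your claim that every $S_m$ has $(*)_p$. This is harmless for Theorem~\ref{thSimple}, since every $A_m$ with $m\ge 5$ has $(*)$. It does break your parallel treatment of $S_8$: there the minimal intersections for $p=2$ are nontrivial, so aiming for $P_i\cap P_i^x=1$ for all $i$ is impossible.
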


We recall that all finite simple groups have $(*)$,
according to a theorem of Mazurov and Zenkov \cite{MZ96}.

\vspace{0.1cm}
\section{Solvable groups} \label{sec2}

\subsection{Hereditary properties of $(*)$}

\begin{lemma} \label{lem:normal}
If $G$ has $(*)_p$ and $N \unlhd G$, then $N$ has $(*)_p$.
\end{lemma}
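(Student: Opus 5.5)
Take Sylow $p$-subgroups $P, P^g$ of $G$ with $P\cap P^g = O_p(G)$, which exist because $G$ has $(*)_p$. The natural candidates in $N$ are $P\cap N$ and $P^g\cap N$; by Sylow's theorem for normal subgroups, both are Sylow $p$-subgroups of $N$. Their intersection is
\[
(P\cap N)\cap (P^g\cap N) = P\cap P^g\cap N = O_p(G)\cap N .
\]
So the statement reduces to showing $O_p(G)\cap N = O_p(N)$.

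For this, first note that $O_p(N)$ is characteristic in $N$, and $N$ is normal in $G$, so $O_p(N)$ is a normal $p$-subgroup of $G$. Hence $O_p(N)\le O_p(G)$, and clearly $O_p(N)\le N$, so $O_p(N)\le O_p(G)\cap N$. Conversely, $O_p(G)\cap N$ is a normal $p$-subgroup of $N$, being the intersection of a normal subgroup of $G$ with $N$. Hence $O_p(G)\cap N\le O_p(N)$, and the two are equal.

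Putting these together, the Sylow $p$-subgroups $P\cap N$ and $P^g\cap N$ of $N$ intersect exactly in $O_p(N)$, so $N$ has $(*)_p$. The only step needing care is the identification $O_p(G)\cap N=O_p(N)$, which rests on $O_p(N)$ being characteristic in a normal subgroup. Everything else is the standard fact that Sylow subgroups intersect normal subgroups in Sylow subgroups, so I do not expect a real obstacle.
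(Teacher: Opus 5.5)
Your proof is correct and matches the paper's argument: intersect the two Sylow $p$-subgroups $P$ and $P^g$ of $G$ with $N$, and use $O_p(G)\cap N = O_p(N)$. The paper uses this last identity without comment, so your justification of it (via $O_p(N)$ being characteristic in $N \unlhd G$) is a useful addition.
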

\begin{proof}
Choose $P \in \Syl_p(G)$ and $g \in G$ such that $P \cap P^g = O_p(G)$.
Then the Sylow $p$-subgroups $P \cap N$ and $P^g \cap N$ of $N$
intersect in $O_p(G) \cap N = O_p(N)$.
\end{proof}

However, $(*)_p$ does not pass to quotients.

\begin{example} \label{ex:Q}
Let $V = (\mathbb{F}_3)^2$ and let $P \cong D_8$ be
the group of signed $2 \times 2$ permutation matrices in $\mathbb{F}_3$.
It is easy to check that $H =V \rtimes P$ does not have $(*)_2$.
On the other hand, the group algebra $\mathbb{F}_5[H] \rtimes H$ has $(*)$.
\end{example}

\subsection{Translated regular orbits} 

The following synchronization result for linear groups
is the key tool for the proof of Theorem~\ref{thMain}.

\begin{proposition} \label{prop:translated}
Let $G$ be a finite nilpotent group and $V$ a completely reducible
faithful $G$-module.
Let $P_1,\ldots,P_n$ be the Sylow subgroups of $G=P_1\times\cdots\times P_n$,
and suppose that for each $i = 1,\ldots,n$ there exists $v_i \in V$ such that $C_{P_i}(v_i) = 1$.
Then for every $t_1,\ldots,t_n \in V$ there exists
$v\in V$ such that 
$$ C_{P_i}(v + t_i) \> = \> 1 $$
for every $i=1,\ldots,n$.
\end{proposition}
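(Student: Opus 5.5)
The plan is to reduce to a statement about avoiding unions of affine subspaces, and then to exploit the fact that different Sylow factors of a nilpotent group act through commuting pieces of each irreducible constituent. First I would record the basic structure. All irreducible constituents of $V$ have the same characteristic $r$. If $r=p_i$ for some $i$, then $P_i$ acts trivially on every irreducible constituent, because $O_{r}$ of an irreducible linear group in characteristic $r$ is trivial. Complete reducibility and faithfulness then force $P_i=1$, and such an index can simply be discarded. So every $P_i$ is an $r'$-group and all actions are coprime. In particular, for each $i$ we have $V=C_V(P_i)\oplus[V,P_i]$, and $C_{P_i}(v+t_i)$ depends only on the projection of $v+t_i$ to $[V,P_i]$. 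Moreover, $C_{P_i}(w)\ne1$ if and only if $w$ lies in one of the proper subspaces $C_V(x)$, where $x$ ranges over the subgroups of prime order of $P_i$. Thus the ``bad set'' for index $i$ is
$$B_i=\bigcup_{x}\bigl(C_V(x)-t_i\bigr),$$
and I must show that $V\neq B_1\cup\cdots\cup B_n$.

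Next I would decompose $V=W_1\oplus\cdots\oplus W_m$ into irreducible $G$-modules and analyse a single $W=W_j$. Let $E=\mathrm{End}_G(W)$, a finite field. Since $G/C_G(W)$ is nilpotent and acts irreducibly, I would use the standard fact that, after extending scalars to a splitting field, $W$ becomes a tensor product $W\otimes_{\mathbb F}E\cong X_1\otimes\cdots\otimes X_n$ with each $X_i$ an irreducible $P_i$-module. This is the point where the commuting-factor structure enters: $P_i$ acts only on the $i$-th tensor factor. Consequently, the fixed-point space $C_W(x)$ of an element $x\in P_i$ is of the form $X_1\otimes\cdots\otimes C_{X_i}(x)\otimes\cdots\otimes X_n$. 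The subspaces that are bad for different indices are therefore ``in general position'' relative to each other, in the sense that each is defined by a condition on a different tensor factor.

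The core step, which I expect to be the main obstacle, is to turn this independence into the existence of a single $v$ avoiding all translated bad sets simultaneously. I would argue by induction on $n$. Suppose $v'$ works for $P_1,\ldots,P_{n-1}$ with translates $t_1,\ldots,t_{n-1}$. I would look for $v=v'+u$ where $u$ ranges over a family of perturbations that keeps $v'+u+t_i$ regular for $i<n$. The natural choice is $u=(\lambda-1)(v'+t_i)$ with scalars $\lambda$ in the centraliser algebra $\prod_j\mathrm{End}_G(W_j)$. Since $P_i$ commutes with these scalars, invertible such $\lambda$ preserve $P_i$-regularity, but they are tied to a specific $i$. To handle all $i<n$ at once, I would instead use elements of the centraliser of $P_1\times\cdots\times P_{n-1}$. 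By the tensor decomposition, this centraliser contains $\mathrm{End}(X_n)$ acting on the last tensor factor, and it is large enough to move the $X_n$-component freely. The hypothesis that $P_n$ has a regular vector $v_n$ then says that a generic choice in the $X_n$-direction avoids $B_n$. The delicate points will be, first, to make this ``generic'' precise over small finite fields, and second, to keep the translates $t_i$ under control, since the centraliser does not fix $t_i$. If the tensor argument becomes unwieldy, my fallback is a direct count: bound $|B_n\cap(v'+U)|$ by $\sum_x|C_V(x)\cap U|$ over a suitable subspace $U$ on which $P_1,\ldots,P_{n-1}$ act trivially, and show that this sum is smaller than $|U|$ using the existence of $v_n$.
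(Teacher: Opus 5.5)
Your preliminary reductions are fine: you correctly discard any $P_i$ with $p_i=r$ and reduce to avoiding translated fixed-point spaces. But the proposal stops exactly where the proof has to happen, and neither mechanism you suggest for the core step works.

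\emph{The main perturbation.} Conjugating by an invertible $c$ that commutes with $P_1\times\cdots\times P_{n-1}$ controls $C_{P_i}(v+t_i)$ only when $v+t_i=c(v'+t_i)$. For a single $v$ this requires $(c-1)(t_i-t_j)=0$ for all $i,j<n$, so arbitrary translates break it. Even when all $t_i=0$, the $\mathrm{GL}(X_n)$-orbit of $v'$ contains a $P_n$-regular vector only if $v'$ has large enough rank. Here $v'$ is viewed as a linear map $(X_1\otimes\cdots\otimes X_{n-1})^*\to X_n$. Your inductive hypothesis does not provide this rank.

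\emph{The fallback.} It fails outright. If $V$ is irreducible and faithful and $P_1\neq 1$, then $C_V(P_1)$ is a proper $G$-submodule, hence $0$. So there is no nonzero $U$ on which $P_1,\ldots,P_{n-1}$ act trivially. In any case the union bound $\sum_x|C_V(x)\cap U|<|U|$ can fail even though regular vectors exist. For $C_2^3$ acting by diagonal signs on $\mathbb{F}_3^3$ the sum is $3\cdot 9+3\cdot 3+1=37>27$.

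\emph{A smaller point.} The tensor decomposition is of $W$ itself regarded as an $E$-space, $W\cong X_1\otimes_E\cdots\otimes_E X_n$. The module $W\otimes_F E$ instead splits as a sum of Galois conjugates.

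The missing idea is algebraic rather than combinatorial.
\begin{itemize}
\item Work in each isotypic component $W\cong U_1\otimes_E\cdots\otimes_E U_k\otimes_E E^m$. The multiplicity factor $E^m$ matters, because $P_i$ need not have regular vectors on a single irreducible.
\item Aim only for $C_{P_i}(x_W+t_{i,W})\subseteq C_{P_i}(v_{i,W})$; this suffices after summing over components.
\item Replace $P_i$-regularity by the condition that the $i$th flattening $M_i(y)$ has full row rank. This is a $d_i\times r_i$ matrix with $d_i=\dim_E U_i$, and full row rank forces $C_{P_i}(y)=C_{P_i}(W)$.
\item When $d_i\le r_i$, a $d_i\times d_i$ minor of $M_i(X+t_i)$ is a nonzero polynomial of degree at most one in each coordinate of $X$, whatever $t_i$ is. This is what neutralises the translates.
\item The product of these $k$ minors has degree at most $k$ in each variable. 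Each nontrivially acting $P_i$ has a central element of order $p_i$ acting as an $E$-scalar, so the distinct primes $p_i$ all divide $q-1$, giving $k<q=|E|$. Hence the product is nonzero at some point of $W$.
\end{itemize}
This arithmetic bound is exactly the answer to your worry about ``generic'' choices over small finite fields.

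The remaining case is when one factor has $d_1>r_1$, so full row rank is impossible. This is where the hypothesis on $v_1$ is genuinely used. Take a full-column-rank $A$ whose column space contains that of $M_1(v_1)$, impose $M_1(x+t_1)=AB$ with $B$ a matrix of indeterminates, and include $\det B$ in the product.
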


Note that $(|V|,|G|)=1$ follows by the nilpotence of $G$ and complete reducibility.
The situation where the $t_i$'s are zero vectors can be considered
a synchronization theorem for Sylow subgroups of nilpotent linear groups.
This special case is already nontrivial and implies the main theorem of Hargraves \cite{Har81} (assuming the result for linear $p$-groups).
The fact that the $t_i$'s can be chosen arbitrarily is surprising.

The proof of Proposition~\ref{prop:translated} is somewhat technical,
but the main idea is to use a tensor decomposition
to turn the stabilizer conditions into determinant conditions.
Suitable parametrizations make each determinant a polynomial of degree at most one.
The number of determinant conditions is smaller than the size of the relevant finite field,
which allows us to satisfy all the stabilizer conditions simultaneously.

\begin{proof}[Proof of Proposition~\ref{prop:translated}]
For each isotypic component $W$ of $V$,
let $v_{i,W},t_{i,W}\in W$ be the projections of $v_i$ and $t_i$ respectively.
We shall find $x_W \in W$ such that
\begin{equation} \label{eq:component-containment}
    C_{P_i}(x_W+t_{i,W}) \subseteq C_{P_i}(v_{i,W})
    \qquad (1\leqslant i\le n).
\end{equation}
For $v=\sum_W x_W$, we have
\[
    C_{P_i}(v+t_i)
    =\bigcap_W C_{P_i}\bigl(x_W+ t_{i,W} \bigr)
    \subseteq \bigcap_W C_{P_i}\bigl(v_{i,W}\bigr)
    =C_{P_i}(v_i)=1
\]
for every $i$, as desired.

Now let $F$ be the ground field and fix an isotypic component $W$ of $V$.
We suppress the subscript $W$, so let $v_1,\ldots,v_n,t_1,\ldots,t_n \in W$.
Write $W \cong S^{\oplus m}$, where $S$ is an irreducible $FG$-module,
and set $E=\mathrm{End}_{FG}(S)$.
By Schur's lemma and Wedderburn's theorem, $E$ is a finite field.
Its action gives $S$ an $E$-vector-space structure,
and the image of $FG$ on $S$ is $\mathrm{End}_E(S)$.
For each $i$, let $A_i \cong \mathrm{Mat}_{d_i}(E)$ be the $E$-algebra generated by the image of $P_i$ on $S$.
These algebras are semisimple, commute pairwise, and together generate $\mathrm{End}_E(S)$.
After omitting the $P_i$ acting trivially on $W$
and relabelling the remaining indices, we have
\[
    W\cong U_1\otimes_E\cdots\otimes_E U_k\otimes_E E^m,
\]
where $P_i$ acts only on $U_i$, absolutely irreducibly, and $\dim_E U_i=d_i$.

Let $q=|E|$.
The nontrivial image of $P_i$ on $U_i$ has a central element of order $p_i$,
which acts as a scalar by absolute irreducibility.
Hence $p_i\mid q-1$ for each $i\leqslant k$, and $k<q$ because these primes are distinct.

Choose bases in all tensor factors.
For $y \in W$, let $M_i(y)$ be its $i$th flattening:
the $d_i\times r_i$ matrix of the contraction map
\[
    \left(\bigotimes_{j\ne i}U_j\otimes_E E^m\right)^*
    \to U_i,
    \qquad
    r_i=m\prod_{j\ne i}d_j.
\]
An element of \(P_i\) fixes \(y\) if and only if it fixes every column of \(M_i(y)\). If \(M_i(y)\) has full row rank, then the elements of \(P_i\) fixing $y$ act trivially on \(U_i\) and hence on \(W\), so
\begin{equation}\label{eq:full-rank-centralizer}
    C_{P_i}(y) = C_{P_i}(W) .
\end{equation}
Since \(C_{P_i}(W)\subseteq C_{P_i}(v_i)\), \eqref{eq:full-rank-centralizer} gives \eqref{eq:component-containment} for $y = x +t_i$.
We shall use the elementary fact that a nonzero polynomial over $\mathbb{F}_q$ in $N$ variables, of degree less than $q$ in each variable, cannot vanish on all of $\mathbb{F}_q^N$.

\smallskip
\noindent\emph{Case 1: $d_i \leqslant r_i$ for every $i \leqslant k$.}
Let $X$ be a formal vector with indeterminates as the coordinates in the chosen tensor product basis.
For each $i$, choose a $d_i \times d_i$ minor of $M_i(X+t_i)$ and denote its determinant by $f_i(X)$.
Each $f_i$ is a nonzero polynomial of degree at most one in each coordinate.
Thus
\[
    \prod_{i=1}^k f_i(X)
\]
is nonzero and has degree at most $k<q$ in each coordinate.
Hence there exists $x\in W$ at which this product is nonzero,
and \eqref{eq:full-rank-centralizer} holds.

\smallskip
\noindent\emph{Case 2: $d_1>r_1$, after relabelling.}
There is at most one such index, since $r_i \geqslant d_j$ whenever $j\ne i$.
Put $R=U_2\otimes_E\cdots\otimes_E U_k\otimes_E E^m$, so that $W=U_1\otimes_E R$.
Choose a full-column-rank matrix
$A\in\mathrm{Mat}_{d_1\times r_1}(E)$ whose column space contains that of
$M_1(v_1)$. Let $B$ be an $r_1\times r_1$ matrix of independent
indeterminates, and define $x(B)$ by
\[
    M_1(x(B)+t_1)=AB.
\]
For every invertible specialization of $B$, the matrices $AB$ and
$A$ have the same column space. Hence
\begin{equation}\label{eq:first-factor}
    C_{P_1}(x(B)+t_1) \subseteq C_{P_1}(v_1).
\end{equation}

Let $y(B)$ be defined by $M_1(y(B))=AB$, and put $y=y(I)$.
Every $M_i(y)$ with $i>1$ has full row rank.
Indeed, otherwise some nonzero functional on $U_i$ would annihilate $y$ by contraction;
tensoring it with a nonzero functional on the remaining factors of $R$
would give a nonzero vector in the kernel
of the injective map $M_1(y) \colon R^* \to U_1$.
For each $i>1$, choose a $d_i\times d_i$ minor nonzero on $M_i(y)$,
and denote its determinant, evaluated on $M_i(x(B)+t_i)$, by $f_i(B)$.
Since
\[
    x(B)+t_i=y(B)+(t_i-t_1),
\]
the homogeneous part of degree $d_i$ of $f_i(B)$ is the corresponding minor of $M_i(y(B))$.

To bound the degree in an individual entry $B_{ab}$, write $A_a$ for column $a$ of $A$,
and let $e_b$ be the corresponding tensor product basis vector of $R$.
The coefficient of $B_{ab}$ in $y(B)$ is the pure tensor $A_a\otimes e_b$.
Its matrix in every flattening has rank at most one, so multilinearity of the determinant implies that each \(f_i(B)\) has degree at most one in \(B_{ab}\).
It follows that the product
\[
    \det(B) \prod_{i=2}^k f_i(B)
\]
is a nonzero polynomial of degree at most $k<q$ in each variable.
Choosing a specialization over $E$ at which it is nonzero,
\eqref{eq:first-factor} handles $P_1$,
while \eqref{eq:full-rank-centralizer} handles every $P_i$ with $i>1$.
Thus \eqref{eq:component-containment} holds in this case as well,
and the proof is complete.
\end{proof}

\subsection{Abstract groups}

The following observation explains how centralizers of vectors
are relevant in establishing Conjecture~\ref{conjMain}.

\begin{lemma}[Lemma~2.4 in \cite{LS26}] \label{lemIntCen} 
Let $G = V \rtimes K$.
For each $v \in V$, we have $C_K(v) =K \cap K^v$.
\end{lemma}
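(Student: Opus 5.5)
The proof is a direct computation in the semidirect product $G = V \rtimes K$. Here $V$ is abelian (in the paper's applications a module, written additively). Inside $G$ we write its elements multiplicatively, so that $k^{-1}vk = v^k$ denotes the module action. The plan is to compare the conditions for $k \in K$ to lie in $K^v = v^{-1}Kv$ and for $k$ to centralize $v$.

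First I would take $k \in C_K(v)$. Then $vk = kv$, so $k = v^{-1}kv \in K^v$, which gives $C_K(v) \subseteq K \cap K^v$.

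Conversely, let $k \in K \cap K^v$, so that $k = v^{-1}k'v$ for some $k' \in K$. Rewriting,
\[
k = v^{-1}k'v = k'\,(k'^{-1}v^{-1}k')\,v = k'\,(v^{-1})^{k'}\,v ,
\]
where $(v^{-1})^{k'}\,v \in V$ because $V$ is normal. Every element of $G$ factors uniquely as a product of an element of $K$ and an element of $V$, since $V \cap K = 1$. Comparing the two factorizations of $k$ forces $k = k'$ and $(v^{-1})^{k'}v = 1$. Hence $v^{k'} = v$, that is, $k = k' \in C_K(v)$.

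There is no real obstacle here. The only care needed is to keep the conjugation convention ($K^v = v^{-1}Kv$) consistent with the action notation, and to use the uniqueness of the $KV$-factorization correctly. In additive notation, the computation above says that $k$ sends $v$ to itself, i.e.\ $C_K(v)$ is the usual stabilizer of the vector $v$ in the linear action, which is how the lemma is used later.
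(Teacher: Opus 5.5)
Your proof is correct and is essentially the paper's argument: the paper writes an element of $K\cap K^v$ as $b^v$ with $b\in K$ and notes that $b^{-1}b^v=b^{-1}v^{-1}bv$ lies in $K\cap V=1$, which is exactly your uniqueness-of-factorization step. Note only that you never use the abelianness of $V$, so the argument holds for an arbitrary semidirect product.
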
 
\begin{proof}
It is easy to see that $C_K(v) \subseteq K \cap K^v$.
Now suppose $a = b^v \in K \cap K^v$ for some $a,b \in K$ and $v \in V$.
Multiplying by $b^{-1}$ we obtain $b^{-1}a = b^{-1} v^{-1} b v$.
The left side lies in $K$, while the right side lies in $V$, so $a=b \in C_K(v)$. 
\end{proof} 

We are ready for the proof of our main result.
We write $\pi(G)$ for the set of prime divisors of $|G|$.

\begin{proof}[Proof of Theorem~\ref{thMain}]
We work by induction on the order of $G$.
Let $P_i \in \Syl_{p_i}(G)$ for distinct primes $p_1,\ldots,p_n$,
so that we want to find $x \in G$ such that $P_i \cap P_i^x = O_{p_i}(G)$ for all $i$.

We first reduce to the situation where the Frattini subgroup $\Phi(G)$ is trivial.
This is because $\Phi(G/\Phi(G))=1$,
and $F(G/\Phi(G))=F(G)/\Phi(G)$ \cite[Theorem~10.6(c)]{DH92}.
In fact $\Phi(G)P_i/\Phi(G) \in \Syl_{p_i}(G/\Phi(G))$,
and working in $G/\Phi(G)$ we obtain $x \in G$
such that $\Phi(G)P_i \cap  \Phi(G)P_i^x \subseteq F(G)$,
which implies $P_i \cap P_i^x = O_{p_i}(G)$ for all $i$.

So $\Phi(G)=1$ and $F(G)$ is a direct product of elementary abelian groups.
Up to arranging the $p_i$'s, set $p=p_1$ such that $A=O_p(G) \neq 1$.

Observe that $G/A$ satisfies again the hypotheses of the theorem, and set $H/A = F(G/A)$.
By induction, there exists $xA \in G/A$ such that 
\begin{equation} \label{eq:Quoz1}
 P_i A \cap P_i^x A \subseteq H
\end{equation}
for every $i=1,\ldots,n$.
Since $H/A$ is a $p'$-group, we have $P_1 A \cap P_1^x A = A$.
If we set $Q_i=P_i \cap H$ then both $Q_i$ and $Q_i^x$ are Sylow $p_i$-subgroups of $H$.
Note that $AQ_i \unlhd H \unlhd G$, so $AQ_i$ has $(*)_{p_i}$ by Lemma~\ref{lem:normal}.
Moreover, by the Frattini argument we have $A N_H(Q_i)= H$,
so for each $i$ there exists $t_i \in A$ such that $Q_i^x=Q_i^{t_i}$.

Set $\overline{H}=H/C_H(A)$ and note that, by the discussion above and Lemma~\ref{lemIntCen},
the $\overline{H}$-module $A$ satisfies the hypotheses of Proposition~\ref{prop:translated}.
If $I=\{ 2\leqslant i\leqslant n : Q_i \nsubseteq C_H(A)\}$,
then $\pi(\overline{H})=\{ p_i \in \pi(G)\}_{i\in I}$
and $\{\overline{Q}_i=Q_iC_H(A)/C_H(A)\}_{i\in I}$ is a family of Sylow subgroups of $\overline{H}$.
We find $a \in A$ such that 
\[
\overline{Q}_i \cap C_{\overline{H}}(a+t_i) =1_{\overline{H}}
\]
for every $i\in I$.
If $i \notin I$ then $Q_i \subseteq C_H(A)$, so by Lemma~\ref{lemIntCen} we have
\begin{equation} \label{eq:Quoz2}
Q_i\cap Q_i^{xa}=Q_i\cap Q_i^{a t_i}=C_{Q_i}(a+t_i) \subseteq C_H(A) 
\end{equation}
for every $i\geqslant2$.

Note that $C_H(A)/A$ is nilpotent and $A$ is central in $C_H(A)$,
thus $C_H(A)$ is nilpotent and normal in $G$, so $C_H(A) \subseteq F(G)$.
Since $P_i \cap P_i^{xa} \subseteq H$ by (\ref{eq:Quoz1}), by (\ref{eq:Quoz2}) we conclude
that 
\[
P_i \cap P_i^{xa} =Q_i\cap Q_i^{xa} \subseteq Q_i \cap F(G) \subseteq O_{p_i}(G) 
\]
for every $i \geqslant 2$.
We also have $P_1 \cap P_1^{xa}=A$ since $xa \in xA$,
so $xa$ is the desired element.
\end{proof}

\section{Symmetric and alternating groups} \label{sec3}

In this section we prove that Conjecture~\ref{conjMain} holds
for all symmetric and alternating groups.
Let $G$ be $S_n$ or $A_n$ with $n \geqslant 5$.
Note that $O_p(G) = 1$ for every prime $p$, so $G$ has $(*)_p$
if and only if there exist two Sylow $p$-subgroups of $G$ intersecting trivially.
By \cite{MZ96}, $G$ has $(*)_p$ if and only if $(G,p) \ne (S_8,2)$.

Let us recall the probabilistic method used in \cite{BH26}
to establish Conjecture~\ref{conjMain} for non-alternating simple groups.
Suppose $G$ has $(*)$, let $G_p \in \mathrm{Syl}_p(G)$ and let
\begin{equation*}
    Q_p(G) = \frac{|\{x \in G : G_p\cap G_p^x \ne 1\}|}{|G|}
\end{equation*}
be the probability that $G_p\cap G_p^x \ne 1$ for a uniformly random chosen element $x\in G$.
By a union bound, Conjecture~\ref{conjMain} holds for $G$ if
\begin{equation*}
    \sum_{p\in\pi(G)} Q_p(G) < 1 .
\end{equation*}
Asymptotic estimates for $Q_p(G)$ in the cases of symmetric and alternating groups
are given in \cite{DGG+26,Ebe26}, but those bounds are ineffective for small $n$.
Instead, we proceed as in \cite[page~5]{BH26}.
Let $\{x_{p,1},\dots,x_{p,k}\}$ be a complete set of representatives
of the conjugacy classes in $G$ of elements of order $p$.
As observed by Liebeck and Shalev in \cite[Proof of Theorem~1.3]{LS99}, we have
\begin{equation*}
    Q_p(G) \leqslant\widehat{Q}_p(G):= \sum_{i = 1}^k \frac{|x_{p,i}^G\cap G_p|^2}{|x_{p,i}^G|},
\end{equation*}
noting that $|x_{p,i}^G\cap G_p|$ is independent from the choice of the Sylow $p$-subgroup $G_p$.
This immediately gives the following.

\begin{lemma}
    \label{l:sym_prob}
    Suppose $G$ has $(*)$. Then Conjecture~\ref{conjMain} holds for $G$ if
    \begin{equation*}
    \sum_{p\in\pi(G)} \widehat{Q}_p(G) < 1.
\end{equation*}
\end{lemma}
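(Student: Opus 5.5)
The plan is a union bound, combined with the inequality of Liebeck and Shalev quoted above. Fix Sylow subgroups $P_1,\dots,P_n$ of $G$ for distinct primes $p_1,\dots,p_n$. Each $P_i$ is conjugate to the fixed $G_{p_i}$, say $P_i = G_{p_i}^{g_i}$. Since $G$ has $(*)$, the inclusion-minimal members of $\{P_i\cap P_i^g : g\in G\}$ are exactly the subgroups equal to $O_{p_i}(G)$. Here we are in the setting of this section, where $O_p(G)=1$. For a general $G$ with $(*)$, one reads $Q_p$ with $O_p(G)$ in place of $1$; the argument is identical. It therefore suffices to find $x\in G$ such that $P_i\cap P_i^x = O_{p_i}(G)$ for every $i$.

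The first step is to transport the probabilities. The map $x\mapsto g_i x g_i^{-1}$ is a bijection of $G$, and
\[
P_i\cap P_i^x = \bigl(G_{p_i}\cap G_{p_i}^{g_i x g_i^{-1}}\bigr)^{g_i}.
\]
Hence the proportion of $x\in G$ with $P_i\cap P_i^x\neq O_{p_i}(G)$ equals $Q_{p_i}(G)$, so it does not depend on the choice of $P_i$.

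The second step is the union bound. The proportion of $x$ that are bad for at least one $i$ is at most
\[
\sum_i Q_{p_i}(G)\le \sum_{p\in\pi(G)} Q_p(G)\le \sum_{p\in\pi(G)}\widehat{Q}_p(G)<1.
\]
Here we use $Q_p(G)\le \widehat{Q}_p(G)$, as recalled from \cite{LS99}. Primes not dividing $|G|$ contribute nothing, because their Sylow subgroups are trivial. So some $x\in G$ is good for every $i$ simultaneously, which proves the lemma.

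I do not expect any real obstacle. The one point needing care is that the conjecture only asks for inclusion-minimal intersections, and $(*)$ is what identifies these minimal intersections with $O_{p_i}(G)$. That identification is what makes the event ``$P_i\cap P_i^x\neq O_{p_i}(G)$'' the correct one to bound.
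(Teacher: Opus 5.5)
Your proposal is correct and is essentially the paper's argument: a union bound over the primes combined with the Liebeck--Shalev inequality $Q_p(G)\leqslant\widehat{Q}_p(G)$, with the conjugation step making explicit that the proportion of bad $x$ does not depend on which Sylow subgroup is chosen. One small remark: $(*)$ is not needed for sufficiency, since $O_{p_i}(G)$ lies in every $P_i\cap P_i^g$ and so attaining it already gives inclusion-minimality; moreover, the hypothesis $\sum_{p}\widehat{Q}_p(G)<1$ forces $O_p(G)=1$ for all $p$ (a class of elements of order $p$ inside a nontrivial $O_p(G)$ contributes at least $1$), so your extension to nontrivial cores never arises.
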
 

For $n \leqslant 40$,
the validity of Conjecture~\ref{conjMain} for symmetric and alternating groups
can be checked using {\sc Magma} (see also \cite[Remark 5.3]{BH26}).
The verification code is available in \cite{HLRS26}.
Therefore, we can assume $n > 40$.

\subsection{Symmetric groups}

We first deal with the symmetric groups.
We may assume that $x_{p,j}$ has cycle type $[p^j,1^{n-pj}]$, so
\begin{equation*}
    |x_{p,j}^{S_n}| = \frac{n!}{p^j j! (n-pj)!} =: C(n,p,j).
\end{equation*}
Defining $a(n,p,j) := |x_{p,j}^{S_n} \cap (S_n)_p|$, we have
\begin{equation*}
    \widehat{Q}_p(S_n) = \sum_{j = 1}^{\lfloor n/p \rfloor} \frac{a(n,p,j)^2}{C(n,p,j)}.
\end{equation*}

Our goal is to give a good bound for $a(n,p,j)$.
For integers $m\geqslant1$ and $j\geqslant0$, let
\begin{equation*}
    B(p,m,j) := (p-1)^j{m+j-1\choose j}.
\end{equation*}

\begin{lemma}\label{l:sym_a<B}
    For every prime $p$, every $n$ and every $j\geqslant 0$, we have
    \begin{equation*}
        a(n,p,j) \leqslant B\left( p, \left\lfloor\frac{n}{p}\right\rfloor, j\right).
    \end{equation*}
\end{lemma}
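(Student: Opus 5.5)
We need to bound the number $a(n,p,j)$ of elements of cycle type $[p^j,1^{n-pj}]$ in a Sylow $p$-subgroup $P$ of $S_n$ by $(p-1)^j\binom{m+j-1}{j}$, where $m=\lfloor n/p\rfloor$. The plan is to use the structure of $P$: write $n=\sum_k c_k p^k$ in base $p$, so $P=\prod_k P_{p^k}^{c_k}$, where $P_{p^k}=C_p\wr\cdots\wr C_p$ is a Sylow $p$-subgroup of $S_{p^k}$ acting on disjoint blocks. Fixed points are irrelevant (the $c_0$ leftover points), and $m=\sum_{k\ge1}c_kp^{k-1}$. The key observation is that $\binom{m+j-1}{j}(p-1)^j$ is the coefficient of $x^j$ in $(1-(p-1)x)^{-m}$. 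So it suffices to show the generating function $F_P(x)=\sum_j a_P(j)x^j$ (with $a_P(j)$ counting elements of $P$ with cycle type $[p^j,1^{*}]$) is coefficientwise dominated by $(1-(p-1)x)^{-m}$. Both sides are multiplicative over direct products (and $m$ is additive over the blocks), so it reduces to showing $F_{P_{p^k}}(x)\preceq (1-(p-1)x)^{-p^{k-1}}$ coefficientwise, for $k\ge1$.

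For this, I would proceed by induction on $k$. For $k=1$, $P_p=C_p$ and $F=1+(p-1)x\preceq(1-(p-1)x)^{-1}$. For $k\ge2$, $P_{p^k}=P_{p^{k-1}}\wr C_p$. An element of the wreath product whose top component is nontrivial permutes the $p$ blocks cyclically, so every orbit has length divisible by $p$ and all its cycles have length a multiple of $p$ on all $p^k$ points; it has cycle type $[p^j,1^*]$ only if all cycles have length exactly $p$, i.e.\ $j=p^{k-1}$. The number of such elements is at most $|P_{p^{k-1}}|^{p-1}(p-1)$ (those $(g_1,\dots,g_p;\sigma)$ with $\sigma^p$-product $g_1\cdots g_p$ in suitable order trivial). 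Elements with trivial top component are $p$-tuples from the base, contributing $F_{P_{p^{k-1}}}(x)^p\preceq(1-(p-1)x)^{-p^{k-1}}$ by induction. So the inductive step requires absorbing the extra term $(p-1)|P_{p^{k-1}}|^{p-1}x^{p^{k-1}}$ into the slack between the inductive bound and the target; more precisely, I would prove the sharper induction hypothesis carrying the exact base polynomial and compare at the single coefficient $j=p^{k-1}$.

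The main obstacle is this coefficient comparison at $j=N:=p^{k-1}$: we need the coefficient of $x^N$ in $F_{P_{N}}(x)^p$ plus $(p-1)|P_N|^{p-1}$ to be at most $(p-1)^N\binom{pN/p\cdot p^{0}+N-1}{N}$ — i.e.\ with exponent $p^{k-1}$, the target coefficient is $(p-1)^N\binom{2N-1}{N}$. Here the slack is large: the target is roughly $(4(p-1))^N$, while $|P_N|^{p-1}=p^{(p-1)(N-1)/(p-1)}=p^{N-1}$, so $(p-1)p^{N-1}$ is much smaller than $(p-1)^N\binom{2N-1}{N}$ minus the base-contribution. To make this rigorous, I would bound the base contribution crudely: its $x^N$ coefficient counts elements of $P_N^p$ with $N$ cycles of length $p$, and the inductive bound gives at most $(p-1)^N$ times the coefficient of $x^N$ in $(1-x)^{-N}$ restricted appropriately; I expect one needs to note that $(1-(p-1)x)^{-N}$ genuinely overcounts, e.g.\ its coefficient already exceeds the true count by at least $p^{N-1}(p-1)$, checked via the term $j<N$ structure, possibly handling small cases ($k=2$, small $p$) directly. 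If this tight comparison fails, a fallback is to strengthen the induction to the bound $F_{P_{p^k}}(x)\preceq(1-(p-1)x)^{-p^{k-1}}-\epsilon_k(x)$ for an explicit nonnegative $\epsilon_k$ providing the needed room.
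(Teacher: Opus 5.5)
Your set-up matches the paper's; with $X=(p-1)x$ your series is literally their $F_r$. You reduce to $P_{p^k}$ via the base-$p$ expansion and multiplicativity, and use the recurrence $F_{P_{p^k}}=F_{P_{p^{k-1}}}^p+(p-1)p^{N-1}x^N$ with $N=p^{k-1}$. But the proposal stops exactly where the lemma has content, and the route you sketch for the coefficient of $x^N$ does not work as stated. Raising the inductive bound $F_{P_{p^{k-1}}}\preceq(1-(p-1)x)^{-N/p}$ to the $p$th power gives $[x^N]F_{P_{p^{k-1}}}^p\le(p-1)^N\binom{2N-1}{N}$. That is \emph{exactly} the target coefficient, so the coefficientwise hypothesis alone leaves no room for the extra term $(p-1)p^{N-1}$. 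Your claim that ``target minus base contribution'' is large is therefore asserted, not derived, and the fallback with an unspecified $\epsilon_k$ is not an argument.

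The missing observation is a degree count. An element of $P_{p^{k-1}}$ has at most $p^{k-2}=N/p$ cycles of length $p$, so $F_{P_{p^{k-1}}}$ is a polynomial of degree $N/p$ and
\[
[x^N]F_{P_{p^{k-1}}}^p=\bigl([x^{N/p}]F_{P_{p^{k-1}}}\bigr)^p .
\]
Hence the top coefficients $c_k=[x^{p^{k-1}}]F_{P_{p^k}}$ satisfy their own recurrence $c_k=c_{k-1}^p+(p-1)p^{N-1}$, with $c_1=p-1$. A separate induction gives $c_k\le(p-1)^N3^{N-1}$, using $p/(p-1)\le 2$ and $3^{N-p}+2^{N-1}\le 3^{N-1}$. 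Since $3^{N-1}\le\binom{2N-1}{N}$, the top coefficient is handled. For $j<N$ the extra term contributes nothing and your inductive bound suffices, and for $j>N$ the coefficient is zero. This is how the paper closes the induction.
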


\begin{proof}
    We first consider the case where $n = p^r$. Set $m = p^{r-1}$, and define
    \begin{equation*}
        F_r(X) = \sum_{j\geqslant 0} \frac{a(p^r,p,j)}{(p-1)^j}X^j.
    \end{equation*}
    A standard Sylow $p$-subgroup $P_r$ of $S_n$ is given by
    \begin{equation*}
        P_1 = C_p, \quad P_k = P_{k-1}\wr C_p.
    \end{equation*}
    We have $F_1(X) = 1+X$.
    For $r\geqslant 2$, we claim by induction that
    \begin{equation}\label{e:sym_F}
        F_r(X) = F_{r-1}(X)^p+\left( \frac{p}{p-1} \right)^{m-1}X^{m}.
    \end{equation}
    To see this, first note that elements $g$ satisfying $g^p = 1$
    in the base group $P_{r-1}$ contribute $F_{r-1}(X)^p$.
    For each of the $p-1$ non-identity elements of the top group $C_p$,
    the condition $g^p = 1$ determines one base coordinate from the other $p-1$.
    This implies that there are
    \begin{equation*}
        (p-1)|P_{r-1}|^{p-1} = (p-1)p^{m-1}
    \end{equation*}
    such elements.
    Each of these elements is fixed-point-free and hence has exactly $m$ cycles of length $p$. Normalizing by $(p-1)^{m}$ gives \eqref{e:sym_F}.

    We now show that
    \begin{equation*}
        [X^m]F_r(X) \leqslant 3^{m-1}.
    \end{equation*}
 Indeed, since $F_1(X) = 1+X$, this is clearly true for $r = 1$. For $r\geqslant 2$ we argue by induction, so by \eqref{e:sym_F}, we have
    \begin{equation*}
        [X^m]F_r(X) = ([X^{m/p}]F_{r-1}(X))^p+\left( \frac{p}{p-1} \right)^{m-1} \leqslant 3^{m-p}+2^{m-1} \leqslant3^{m-1}
    \end{equation*}
    as required. Moreover, one sees that
    \begin{equation*}
        [X^m]F_r(X) \leqslant 3^{m-1}\leqslant {2m-1\choose m} = [X^m](1-X)^{-m}.
    \end{equation*}
We now prove the coefficientwise bound by induction on $r$.
The case $r=1$ follows from
$F_1(X)=1+X\le (1-X)^{-1}$.
For $r\ge 2$, the induction hypothesis gives
\[
F_{r-1}(X)\le (1-X)^{-m/p}
\]
coefficientwise. Since all coefficients are nonnegative,
we may raise this inequality to the $p$th power.
For $0\le \ell<m$, the additional term in the recurrence
contributes nothing, so
\[
[X^\ell]F_r(X)
=
[X^\ell]\bigl(F_{r-1}(X)^p\bigr)
\le
[X^\ell](1-X)^{-m}.
\]
Together with the estimate for the coefficient of $X^m$
above and the fact that $\deg F_r=m$, this yields
    \begin{equation}\label{e:sym_F_2}
        F_r(X) \leqslant (1-X)^{-m}
    \end{equation}
    coefficientwise. That is, $[X^k]F_r(X) \leqslant[X^k](1-X)^{-m}$ for all $k\geqslant 0$.

    Finally, we consider the general case with $n = d_0+d_1p +\cdots +d_Rp^R$ for $0\leqslant d_r < p$. Then a Sylow $p$-subgroup of $S_n$ is a direct product of $d_r$ copies
    of $P_r$ acting on disjoint sets.
    Then by \eqref{e:sym_F_2},
    \begin{equation*}
        \sum_{j\geqslant 0}\frac{a(n,p,j)}{(p-1)^j}X^j = \prod_{r = 1}^RF_r(X)^{d_r} \leqslant (1-X)^{-\sum_{r=1}^Rd_rp^{r-1}} = (1-X)^{-\lfloor n/p\rfloor}.
    \end{equation*}
    Taking the coefficient of $X^j$ completes the proof.
\end{proof}

Now we define
\begin{equation*}
    M(n,p) := \sum_{j = 1}^{\lfloor n/p \rfloor} \frac{B(p,\lfloor n/p \rfloor,j)^2}{C(n,p,j)} ,
\end{equation*}
so that $\widehat{Q}_p(S_n) \leqslant M(n,p)$ by Lemma~\ref{l:sym_a<B}.
The following result is purely numerical and we omit its proof.

\begin{lemma}\label{l:sym_m_n}
    For $n>40$, any prime $p$ and integer $n_0$ with $p \mid n_0 \leqslant n$,
    we have $M(n,p) \leqslant M(n_0,p)$.
    In particular, $M(n,p) \leqslant M(p,p)$.
\end{lemma}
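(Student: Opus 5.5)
The plan is to compare $M(n,p)$ term by term as $n$ varies, in two reductions. First, I would reduce to the case $p\mid n$. Put $m=\lfloor n/p\rfloor$ and $n_1=pm\le n$. The two sums $M(n,p)$ and $M(n_1,p)$ have the same range $1\le j\le m$, and the numerators $B(p,m,j)^2$ agree. For the denominators, write
\[
C(n,p,j)=\binom{n}{pj}\frac{(pj)!}{p^j j!}.
\]
This is increasing in $n$ for fixed $p$ and $j$, so $C(n,p,j)\ge C(n_1,p,j)$ and hence $M(n,p)\le M(n_1,p)$. It therefore suffices to show that $m\mapsto M(pm,p)$ is non-increasing in the relevant range. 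Then $M(pm,p)\le M(n_0,p)$ for every multiple $n_0$ of $p$ with $n_0\le n$, and taking $n_0=p$ gives the final claim.

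Next I would compare consecutive multiples, $M(p(m+1),p)$ against $M(pm,p)$, term by term. For $1\le j\le m$ the ratio of the $j$th terms is
\[
\frac{B(p,m+1,j)^2}{B(p,m,j)^2}\cdot\frac{C(pm,p,j)}{C(p(m+1),p,j)}
=\Bigl(\frac{m+j}{m}\Bigr)^{2}\prod_{i=1}^{p}\frac{pm-pj+i}{pm+i}.
\]
I would aim to show this is at most $1-\delta_j$ with an explicit slack $\delta_j>0$. Each factor of the product is at most $\frac{m-j+1}{m+1}$, so the product is at most $\bigl(1-\frac{j}{m+1}\bigr)^p$. For $p\ge3$ this beats $(1+j/m)^2$ comfortably once $m$ is moderate, at least for $j$ up to a constant fraction of $m$. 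For $p=2$ I would check the ratio exactly. For $j=1$ it equals $\frac{(m+1)^2(2m-1)}{m(m+1)(2m+1)}$, which is less than $1$ because $2m^2+m-1<2m^2+m$. A similar direct polynomial inequality in $m$ and $j$ should work for general $j$.

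The sum $M(p(m+1),p)$ also has one extra term, $j=m+1$, equal to
\[
B(p,m+1,m+1)^2/C(p(m+1),p,m+1)=(p-1)^{2m+2}\binom{2m+1}{m+1}^2\frac{p^{m+1}(m+1)!}{(p(m+1))!}.
\]
This is super-exponentially small in $m$. I would absorb it using the slack $\delta_1\cdot(\text{first term})$, since the $j=1$ term is of order $(p-1)^2m^2/C$, roughly $p^{-1}\cdot p^2m^2/(pm)^p\cdot p$.

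I expect the main obstacle to be large $j$ close to $m$ when $p$ is small, especially $p=2,3$. There the crude bound $(1-j/(m+1))^p$ against $(1+j/m)^2$ can fail. If it does, I would not insist on termwise monotonicity. Instead I would show directly that the terms with $j>m/2$ are negligible in both sums, since $B^2$ grows like $(p-1)^{2j}4^{m}$ while $C$ grows like $(pj)!/j!$. I would then run the termwise argument only for $j\le m/2$, where it holds. Finally, finitely many small pairs $(m,p)$, namely those with $pm$ near $40$ or with $p$ close to $n$, would be checked numerically, in the same way as the $n\le 40$ range was handled by computer. The hypothesis $n>40$ is used exactly to keep these exceptional cases finite and small.
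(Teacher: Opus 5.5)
Your first reduction (replacing $n$ by $pm$ with $m=\lfloor n/p\rfloor$, using that $C(n,p,j)$ increases in $n$) is fine. The second step, however, asserts something false on the range you need. Since $n_0$ may be as small as $p$, the chain ``$M(pm,p)\le M(n_0,p)$ for every multiple $n_0\le n$'' requires $m\mapsto M(pm,p)$ to be non-increasing from $m=1$, and it is not. For $p=2$ one has $M(2,2)=1<M(4,2)=11/3<M(6,2)\approx 8.07$. In fact $M(2m,2)$ keeps growing until about $m=7$, where its top term alone is $\frac14\binom{2m}{m}2^m/m!\approx 21.8$, before falling to $M(40,2)<3/4$. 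Likewise $M(3,3)=2<M(6,3)=4$.

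So for small $n_0$ the inequality $M(n,p)\le M(n_0,p)$ cannot come from consecutive comparisons at all. It holds only because, for $n>40$, $M(n,p)$ has already dropped below those initial values. That is the real role of $n>40$: the statement is false for $n=4$, $n_0=2$. Your list of exceptional cases misses this. The cases needing separate treatment are small $m$ for $p\in\{2,3\}$, not ``$pm$ near $40$''. Moreover, ``$p$ close to $n$'' is an infinite family and cannot be checked numerically.

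Your analysis of the step $m\to m+1$ also puts the difficulty in the wrong place. Write $T_j(m)=B(p,m,j)^2/C(pm,p,j)$ and $R_j$ for your termwise ratio. Large $j$ is harmless. With $f(j)=2\log(1+j/m)+p\log(1-j/(m+1))$ you have $f(1)=(p-2)\log\frac{m}{m+1}\le 0$ and $f'(j)=\frac{2}{m+j}-\frac{p}{m+1-j}<0$ for $1\le j\le m$. Hence $R_j\le 1$ for every $j\le m$ and every $p$.

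The whole issue is the new term $T_{m+1}(m+1)$, and absorbing it into $\delta_1T_1(m)$ fails. For $p=2$, $m=20$ one has $\delta_1T_1(20)=\frac{1}{820}\cdot\frac{20}{39}\approx 6\times10^{-4}$, whereas the new term is $\frac14\binom{42}{21}2^{21}/21!\approx 5.5\times10^{-3}$. The right comparison is with the old top term, which almost disappears since $R_m=4/\binom{pm+p}{p}$. One computes
\[
\frac{T_{m+1}(m+1)}{T_m(m)}=(p-1)^2\,\frac{4(2m+1)^2}{(m+1)^2}\cdot\frac{p(m+1)\,(pm)!}{(pm+p)!}.
\]
This is at most $1-R_m$ in three cases: for $p\ge5$ and all $m\ge1$; for $p=3$ once $m\ge2$; and for $p=2$ once $m\ge7$ (there it equals $4(2m+1)/(m+1)^2$). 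Together with $R_j\le1$, this gives $M(p(m+1),p)\le M(pm,p)$ beyond these thresholds.

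The remaining values $n_0\in\{2,4,\dots,12\}$ and $n_0=3$ must then be handled by direct comparison, not monotonicity. Use $M(n_0,2)\ge 1>M(40,2)\ge M(n,2)$ and $M(3,3)=2>M(39,3)\ge M(n,3)$ for $n>40$.
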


We are ready to establish Conjecture~\ref{conjMain} for symmetric groups.

\begin{theorem}\label{t:sym}
    For $n > 40$, we have
    \begin{equation*}
        \sum_{p\in\pi(S_n)} \widehat{Q}_p(S_n) < 1.
    \end{equation*}
\end{theorem}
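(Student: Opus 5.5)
By Lemma~\ref{l:sym_a<B} we have $\widehat{Q}_p(S_n) \leqslant M(n,p)$, and by Lemma~\ref{l:sym_m_n} also $M(n,p) \leqslant M(p,p)$. So the plan is to bound $\sum_{p\leqslant n} M(n,p)$ by splitting the primes into small, medium and large ranges.

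\textbf{Large primes.} For $p > n/2$ we have $\lfloor n/p\rfloor = 1$. Only $j=1$ contributes, giving
\[
M(n,p) = \frac{(p-1)^2}{n!/(p(n-p)!)} = \frac{p(p-1)^2 (n-p)!}{n!}.
\]
This is tiny unless $p$ is very close to $n$. For $p=n$ or $p=n-1$ it is about $1/(n-2)!$, still negligible for $n>40$. In this range I would bound $(n-p)!/n! \leqslant 1/(n(n-1)\cdots(n-p+1))$ and sum over all $p\leqslant n$ crudely.

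\textbf{General bound.} For every $p$, I would estimate the terms $B(p,m,j)^2/C(n,p,j)$ with $m=\lfloor n/p\rfloor$. Using $\binom{m+j-1}{j} \leqslant 2^{m+j}$, together with
\[
C(n,p,j) = \frac{n!}{p^j j!(n-pj)!} \geqslant \frac{(n-pj+1)^{pj}}{p^j j!},
\]
shows that the ratio of successive terms in $j$ is small. So the sum should be dominated by the $j=1$ term,
\[
\frac{m^2(p-1)^2\, p\,(n-p)!}{n!} \approx \frac{n^2 p}{n^p},
\]
which is at most roughly $n^{2-p}p$.

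\textbf{Small primes.} For $p=2$ this gives order $2$ at $j=1$, which is too weak, since the same heuristic gives $\sum$ over $p=2,3$ comparable to $1$. So the small primes need explicit treatment, and this is the main obstacle. For $p=2$, the $j=1$ term is
\[
\frac{m^2}{\binom n2} \approx \frac{(n/2)^2}{n^2/2} = \frac12,
\]
and this is not small. Maybe $B$ is too crude here, or I mis-bounded. I would compute $M(n,2)$ and $M(n,3)$ directly, with $m=\lfloor n/p\rfloor$, using the monotonicity of Lemma~\ref{l:sym_m_n} to reduce to the single values $n=42$ (for $p=2,3$) and similarly a fixed $n_0$ divisible by $p$ with $n_0$ just above $40$ for $p=5,7$. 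Those explicit numerical values would be evaluated exactly, and they should total well below $1$. If the $j=1$ term for $p=2$ really is close to $1/2$, the margin must come from the other primes contributing very little.

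\textbf{Assembly.} For $p\geqslant 11$, I would use the general bound to show $M(n,p) \leqslant c\,p\,n^{2-p}$ with an explicit constant $c$, whose sum over $p\geqslant 11$ is below $10^{-10}$. Combining this with the explicit finite values for $p\leqslant 7$ would give $\sum_p \widehat{Q}_p(S_n) < 1$. The hardest part is making the $j$-sum estimate uniform and rigorous with explicit constants, and confirming that the small-prime values leave enough margin.
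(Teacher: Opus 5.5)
Your skeleton is the paper's: Lemma~\ref{l:sym_a<B} reduces everything to $M(n,p)$, the primes $p\leqslant 7$ are settled by explicit values via Lemma~\ref{l:sym_m_n}, and the primes $p\geqslant 11$ are bounded separately. Your value $\approx 1/2$ for the $j=1$ term at $p=2$ is genuine and not an artifact of $B$, since in fact $a(n,2,1)=\lfloor n/2\rfloor$ exactly. The budget still closes, because $M(40,2)<3/4$ while $p=3,5,7$ together contribute less than $1/20$.

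However, you apply the reduction for small primes in the wrong direction. Lemma~\ref{l:sym_m_n} gives $M(n,p)\leqslant M(n_0,p)$ only when $p\mid n_0\leqslant n$. So passing to $n_0=42$ for $p=2,3$, or to a multiple of $p$ ``just above $40$'' for $p=5,7$, leaves $n=41$ uncovered (and $41\leqslant n\leqslant 44$ for $p=5$). You must take $n_0$ to be a multiple of $p$ not exceeding $41$, e.g.\ $n_0=40,39,40,35$ as the paper does.

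Your treatment of $p\geqslant 11$ is a genuine gap, because the estimates you list do not give a bound uniform in $n$:
\begin{itemize}
\item The inequality $\binom{m+j-1}{j}\leqslant 2^{m+j}$ introduces a factor $4^{m}$ with $m=\lfloor n/p\rfloor$ unbounded, and this eventually beats any fixed power of $n$.
\item The lower bound $C(n,p,j)\geqslant (n-pj+1)^{pj}/(p^j j!)$ is useless when $pj$ is close to $n$, which is exactly the tail of the $j$-sum.
\item The ratio of consecutive terms is not uniformly small. At $j=m-1$ with $n=pm$ it is about $4m(p-1)^2p/p!$, which exceeds $1$ for large $m$.
\item No absolute constant $c$ makes $M(n,p)\leqslant c\,p\,n^{2-p}$ true. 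The falling factorial $n!/(n-p)!$ is far smaller than $n^p$ once $p$ is not small compared with $\sqrt n$; at $n=2p$ the $j=1$ term alone exceeds $p\,n^{2-p}$ by a factor exponential in $p$.
\end{itemize}

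The true values are tiny, but you need the step you quoted in your first line and then dropped: $M(n,p)\leqslant M(p,p)$. Only $j=1$ contributes to $M(p,p)$, with $B(p,1,1)=p-1$ and $C(p,p,1)=(p-1)!$. Hence $M(p,p)=(p-1)^2/(p-1)!\leqslant 2^{-p}$ for $p\geqslant 11$, using $2^{r+1}r^2\leqslant r!$ for $r=p-1\geqslant 10$. Summing gives $\sum_{p\geqslant 11}\widehat{Q}_p(S_n)\leqslant 2^{-10}$, with no case split at $p=n/2$ and no analysis of the $j$-sum at all.
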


\begin{proof}
    By Lemma~\ref{l:sym_a<B} we have $\widehat{Q}_p(S_n) \leqslant M(n,p)$, so
    \begin{equation*}
        \begin{aligned}
            \widehat{Q}_2(S_n) \leqslant M(n,2) &\leqslant M(40,2) < \frac{3}{4} , \\
            \widehat{Q}_3(S_n) \leqslant M(n,3) &\leqslant M(39,3) < \frac{1}{25} , \\
            \widehat{Q}_5(S_n) \leqslant M(n,5) &\leqslant M(40,5) < \frac{1}{10000} , \\
            \widehat{Q}_7(S_n) \leqslant M(n,7) &\leqslant M(35,7) < \frac{1}{2400} ,
        \end{aligned}
    \end{equation*}
    by direct computation and appealing to Lemma~\ref{l:sym_m_n}.
    Moreover, one can show by induction that for any integer $r \geqslant 10$, we have $2^{r+1}r^2 \leqslant r!$, which yields
    \begin{equation*}
        \widehat{Q}_p(S_n) \leqslant M(p,p) = \frac{(p-1)^2}{(p-1)!} \leqslant2^{-p}
    \end{equation*}
    for $p \geqslant 11$, by taking $r = p-1$. This gives
    \begin{equation*}
        \sum_{p \geqslant11}\widehat{Q}_p(S_n) \leqslant \sum_{k = 11}^{\infty} 2^{-k} = \frac{1}{1024}.
    \end{equation*}
    Therefore,
    \begin{equation*}
        \sum_{p\in\pi(S_n)} \widehat{Q}_p(S_n) \leqslant \frac{3}{4} + \frac{1}{25} + \frac{1}{10000} +\frac{1}{2400} + \frac{1}{1024} < 1 . \qedhere
    \end{equation*}
\end{proof}

\subsection{Alternating groups}

To deal with the involutions in $A_n$,
we consider the contribution of even permutations to $\widehat{Q}_2(S_n)$.
That is,
\begin{equation*}
    \widehat{Q}_{\mathrm{ev}}(S_n) = \sum_{\substack{1\leqslant j \leqslant\lfloor n/2\rfloor\\ \mbox{$j$ even}}} \frac{|x_{2,j}^{S_n}\cap (S_n)_2|^2}{|x_{2,j}^{S_n}|}
    =
    \sum_{\substack{1\leqslant j \leqslant\lfloor n/2\rfloor\\ \mbox{$j$ even}}} 
    \frac{a(n,2,j)^2}{C(n,2,j)}.
\end{equation*}

\begin{lemma}\label{l:alt_4}
    We have
    \begin{equation*}
        \widehat{Q}_p(A_n) \leqslant
        \begin{cases}
            4\widehat{Q}_{\mathrm{ev}}(S_n), \quad &p = 2;\\
            4\widehat{Q}_p(S_n),\quad & \mbox{$p$ odd.}
        \end{cases}
    \end{equation*}
\end{lemma}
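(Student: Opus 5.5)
We compare $\widehat{Q}_p(A_n)$ with $\widehat{Q}_p(S_n)$ one conjugacy class at a time. Two facts drive the argument: how classes of $S_n$ meet $A_n$, and how Sylow subgroups of $A_n$ sit inside those of $S_n$.

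\emph{Sylow subgroups.} For $p$ odd, a Sylow $p$-subgroup of $S_n$ consists of even permutations, so $(A_n)_p=(S_n)_p$. For $p=2$ we may take $(A_n)_2=(S_n)_2\cap A_n$. This is a subgroup of index $2$, since $n\ge 2$ and $(S_n)_2$ contains a transposition. In both cases, the elements of order $p$ in $(A_n)_p$ are exactly the even elements of order $p$ in $(S_n)_p$.

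\emph{Conjugacy classes.} Every $A_n$-class of elements of order $p$ lies in some $S_n$-class $x_{p,j}^{S_n}$ of cycle type $[p^j,1^{n-pj}]$. Such an element is even precisely when $p$ is odd, or when $p=2$ and $j$ is even. The $S_n$-class $x_{p,j}^{S_n}$ is either a single $A_n$-class of the same size, or it splits into two $A_n$-classes $y_1^{A_n},y_2^{A_n}$, each of size $|x_{p,j}^{S_n}|/2$.

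\emph{Comparing contributions.} Fix $j$, write $a=|x_{p,j}^{S_n}\cap (S_n)_p|$, and note that $x_{p,j}^{S_n}\cap (A_n)_p=x_{p,j}^{S_n}\cap (S_n)_p$ because the elements in question are even.
\begin{itemize}
\item If the class does not split, its contribution to $\widehat{Q}_p(A_n)$ is $a^2/|x_{p,j}^{S_n}|$, the same as its contribution to $\widehat{Q}_p(S_n)$.
\item If it splits, write $a_k=|y_k^{A_n}\cap (A_n)_p|$, so $a_1+a_2=a$. The contribution to $\widehat{Q}_p(A_n)$ is
\[
\frac{2(a_1^2+a_2^2)}{|x_{p,j}^{S_n}|}\le \frac{2a^2}{|x_{p,j}^{S_n}|}.
\]
\end{itemize}
Summing over $j$, restricted to even $j$ when $p=2$, gives $\widehat{Q}_p(A_n)\le 2\widehat{Q}_p(S_n)$ for $p$ odd and $\widehat{Q}_2(A_n)\le 2\widehat{Q}_{\mathrm{ev}}(S_n)$. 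Both are stronger than the stated constant $4$.

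The only delicate points are bookkeeping. One must check that $|x^G\cap G_p|$ in the definition of $\widehat{Q}_p(A_n)$ is computed with respect to the Sylow subgroup of $A_n$, which for $p=2$ is the even part of $(S_n)_2$; the identity $x_{2,j}^{S_n}\cap(A_n)_2=x_{2,j}^{S_n}\cap(S_n)_2$ for even $j$ handles this. One must also check that the split halves have exactly half the size of the $S_n$-class, which holds because $|S_n:A_n|=2$. The constant $4$ leaves extra slack, so it does not matter whether the split classes are counted carefully.
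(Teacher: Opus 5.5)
Your argument is correct, and it actually proves the sharper bounds $\widehat{Q}_p(A_n)\leqslant 2\widehat{Q}_p(S_n)$ for $p$ odd and $\widehat{Q}_2(A_n)\leqslant 2\widehat{Q}_{\mathrm{ev}}(S_n)$. It differs from the paper's proof in how the comparison is organised.

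The paper argues element by element rather than class by class. It rewrites $\widehat{Q}_p(G)=\sum_{x}|x^G\cap G_p|^2/|x^G|^2$, with $x$ running over all elements of order $p$ (even involutions when $p=2$). It then compares termwise, using only two crude inequalities:
\[
|x^{A_n}\cap (A_n)_p|\leqslant |x^{S_n}\cap (S_n)_p| \quad\text{and}\quad |x^{S_n}|\leqslant 2|x^{A_n}|.
\]
In that form the class size appears squared, so the factor $2$ becomes $4$, even for non-split classes. The numerator bound $a_k\leqslant a$ also ignores that the two halves of a split class share the intersection $x^{S_n}\cap (S_n)_p$.

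Your version uses slightly more structure. A split class breaks into two $A_n$-classes of size exactly $|x^{S_n}|/2$. The intersection decomposes disjointly as $a=a_1+a_2$, so $a_1^2+a_2^2\leqslant a^2$. In return you save a factor of $2$.

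The paper's route is a one-line termwise comparison with no case distinction between split and non-split classes. Its weaker constant already suffices for Theorem~\ref{t:alt}. Your constant would leave more room there, for example $\widehat{Q}_2(A_n)<3/8$ instead of $3/4$.
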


\begin{proof}
    For $p\geqslant 3$, let $\mathcal{P}_p$ be the set of elements in $S_n$ of order $p$, and we set $\mathcal{P}_2$ to be the set of involutions in $S_n$ that are even permutations. So for every $p$, $\mathcal{P}_p$ is a subset of $A_n$. Since
    \begin{equation*}
        |x^{S_n}\cap (S_n)_p| \geqslant |x^{S_n} \cap (A_n)_p| \geqslant |x^{A_n} \cap (A_n)_p|
    \end{equation*}
    and $|x^{S_n}| \leqslant 2|x^{A_n}|$, we see that
    \begin{equation*}
        \frac{1}{4}\widehat{Q}_p(A_n) = \sum_{x\in\mathcal{P}_p} \frac{|x^{A_n}\cap (A_n)_p|^2}{(2|x^{A_n}|)^2} \leqslant \sum_{x\in\mathcal{P}_p} \frac{|x^{S_n}\cap (S_n)_p|^2}{|x^{S_n}|^2} = 
        \begin{cases}
            \widehat{Q}_{\mathrm{ev}}(S_n), \quad &p = 2;\\
            \widehat{Q}_p(S_n),\quad & \mbox{$p$ odd.}
        \end{cases}
    \end{equation*}
    This completes the proof.
\end{proof}

For odd $p$, we can bound $\widehat{Q}_p(A_n) \leqslant 4\widehat{Q}_p(S_n) \leqslant 4M(n,p)$ by Lemma~\ref{l:sym_a<B}. Similarly, we have $\widehat{Q}_2(A_n)\leqslant 4M_{\mathrm{ev}}(n)$ with
\begin{equation*}
    M_{\mathrm{ev}}(n):= \sum_{\substack{1\leqslant j \leqslant\lfloor n/2\rfloor\\ \mbox{$j$ even}}} \frac{B(2,\lfloor n/2 \rfloor,j)^2}{C(n,2,j)}.
\end{equation*}

We obtain a purely numerical result for $M_{\mathrm{ev}}$; we omit the proof.

\begin{lemma}\label{l:alt_ev}
    For any integer $n \geq 40$,
    we have $M_{\mathrm{ev}}(n) \leqslant M_{\mathrm{ev}}(40) <\frac{3}{16}$. 
\end{lemma}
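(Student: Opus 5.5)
The plan has two parts: a monotonicity statement in $n$, which reduces everything to the single value $n=40$, and a direct evaluation of $M_{\mathrm{ev}}(40)$.

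\emph{Reduction to $n=40$.} Put $m=\lfloor n/2\rfloor$ and write $T_j(n) = B(2,m,j)^2/C(n,2,j)$ for even $j$. Since $p=2$ gives $B(2,m,j)=\binom{m+j-1}{j}$, we have
\[
T_j(n) = \binom{m+j-1}{j}^2 \frac{2^j j!\,(n-2j)!}{n!}.
\]
The argument is the same as for Lemma~\ref{l:sym_m_n}, but only even $j$ are summed. I will show $M_{\mathrm{ev}}(n)\leqslant M_{\mathrm{ev}}(40)$ for $n\geqslant 40$ termwise, handling odd and even $n$ separately.

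\emph{Odd $n\to n+1$.} Here $m$ increases by one. The ratio is
\[
\frac{T_j(n+1)}{T_j(n)}=\Bigl(\frac{m+j}{m}\Bigr)^2\frac{n+1-2j}{n+1},
\]
and it can exceed $1$ for small $j$, so a termwise comparison fails. Instead I will compare $M_{\mathrm{ev}}(n+2)$ with $M_{\mathrm{ev}}(n)$ directly, using $m\mapsto m+1$ and $n\mapsto n+2$. The ratio of corresponding terms is
\[
\Bigl(\frac{m+j}{m}\Bigr)^2\frac{(n+2-2j)(n+1-2j)}{(n+2)(n+1)}.
\]
With $n\approx 2m$ this is roughly $(1+j/m)^2(1-j/m)^2 = (1-j^2/m^2)^2\leqslant 1$, up to lower-order corrections that need to be checked. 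The parities are then handled as two chains, $n\geqslant 40$ even and $n\geqslant 41$ odd. The odd chain is anchored by checking $M_{\mathrm{ev}}(41)\leqslant M_{\mathrm{ev}}(40)$ numerically.

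\emph{Evaluation at $n=40$.} Finally, $M_{\mathrm{ev}}(40)$ is evaluated exactly, as a finite sum of 20 rational terms, and shown to be below $3/16$. The $j=2$ term dominates: it equals $\binom{21}{2}^2\cdot 4\cdot 2/(40\cdot39\cdot38\cdot37)\approx 0.16$, and the remaining terms decay rapidly.

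\emph{Main obstacle.} The hardest step is the rigorous ratio inequality, whose corrections are sensitive when $j$ is close to $m$. For large $j$ I would instead bound $T_j$ absolutely by a tail estimate: $\binom{m+j-1}{j}^2\leqslant 16^m$, while $2^j j!\,(n-2j)!/n!$ is tiny for $j\geqslant m/2$. This leaves the exact ratio comparison only for $j\leqslant m/2$, where it is clean.
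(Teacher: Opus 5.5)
The paper omits the proof of this lemma (it calls it purely numerical), so your plan has to stand on its own. As written, the monotonicity step does not go through.

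\textbf{Where it fails.} Your comparison of $M_{\mathrm{ev}}(n+2)$ with $M_{\mathrm{ev}}(n)$ overlooks that the range of summation grows. With $m=\lfloor n/2\rfloor$, whenever $m+1$ is even, $M_{\mathrm{ev}}(n+2)$ contains the term $T_{m+1}(n+2)$, which has no partner in $M_{\mathrm{ev}}(n)$. No termwise ratio bound controls it, and it is not negligible: $T_{22}(44)=\binom{43}{22}^2/43!!\approx 1.96\cdot 10^{-3}$. That already exceeds the decrease $T_2(42)-T_2(44)\approx 1.74\cdot 10^{-3}$ of the dominant term.

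The tail estimate you would use for such terms is far too weak near $n=40$. At $n=40$ it gives $16^{20}\cdot 2^{20}\,20!/40!=16^{20}/39!!\approx 3.8$ for $j=20$, and about $1.3\cdot 10^{4}$ for $j=10$.

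You have also misjudged where the mass of $M_{\mathrm{ev}}(40)$ lies. The terms do not decay after $j=2$; they are U-shaped. The endpoint term $T_{20}(40)=\binom{39}{20}^2/39!!\approx 0.0149$ is the second largest of the ten terms. Overall $M_{\mathrm{ev}}(40)\approx 0.1860$, only about $0.0015$ below $3/16$.

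\textbf{What already works.} The step you flagged as the main obstacle is actually clean. For $n=2m$ and $2\le j\le m$, numerator minus denominator of your ratio equals
\[
m^2j(3-4j)+mj(2-3j)+j^2(2j-1)(j-1).
\]
This is negative: $j^2(2j-1)(j-1)<2j^4\le 2m^2j^2$, and $m^2j(3-2j)<0$ for $j\ge 2$. So the ratio is below $1$ for every even $j\le m$, with no tail estimate needed. Also, for even $n$ the value of $m$ and the range of $j$ do not change from $n$ to $n+1$, and $T_j(n+1)=T_j(n)(n+1-2j)/(n+1)$. Hence odd $n$ reduce to even $n$ termwise, and no separate odd chain is needed.

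\textbf{How to repair it.} Take the slack from the collapse of the endpoint term rather than from $j=2$. For $n\ge 41$ these facts give three bounds:
\begin{itemize}
\item $T_j(n)\le T_j(40)$ for $2\le j\le 18$;
\item $T_{20}(n)\le T_{20}(41)=T_{20}(40)/41$;
\item $T_j(n)\le T_j(2j)=\binom{2j-1}{j}^2/(2j-1)!!$ for $j\ge 22$.
\end{itemize}
For the diagonal terms in the last bound, consecutive (even-indexed) ratios are below $256/((2j+1)(2j+3))<1/8$. Hence they sum to less than $2.3\cdot 10^{-3}$. This is far below $\tfrac{40}{41}T_{20}(40)\approx 0.0145$, so $M_{\mathrm{ev}}(n)<M_{\mathrm{ev}}(40)$. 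Combined with the exact evaluation of $M_{\mathrm{ev}}(40)$, this proves the lemma.
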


\begin{theorem}\label{t:alt}
    For $n > 40$, we have
    \begin{equation*}
        \sum_{p \in \pi (A_n)} \widehat{Q}_p(A_n) < 1.
    \end{equation*}
\end{theorem}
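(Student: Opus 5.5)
The plan is to mirror the proof of Theorem~\ref{t:sym}, using Lemma~\ref{l:alt_4} to pass from $A_n$ to $S_n$ at the cost of a factor $4$ for each prime. This factor is affordable for odd primes because their bounds in Theorem~\ref{t:sym} were tiny. For $p=2$ the naive bound $4\cdot\frac34$ is useless, so there I will use only the even-involution contribution.

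First, by Lemma~\ref{l:alt_4} and the remark after it, $\widehat{Q}_2(A_n)\leqslant 4M_{\mathrm{ev}}(n)$. Lemma~\ref{l:alt_ev} then gives $M_{\mathrm{ev}}(n)<\frac{3}{16}$ for $n>40$, so $\widehat{Q}_2(A_n)<\frac34$.

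Second, for odd $p$ I have $\widehat{Q}_p(A_n)\leqslant 4\widehat{Q}_p(S_n)\leqslant 4M(n,p)$. I then reuse the numerical bounds already recorded in the proof of Theorem~\ref{t:sym}, which rely on Lemma~\ref{l:sym_m_n}. This gives $\widehat{Q}_3(A_n)<\frac{4}{25}$, $\widehat{Q}_5(A_n)<\frac{4}{10000}$ and $\widehat{Q}_7(A_n)<\frac{4}{2400}$.

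For $p\geqslant 11$ I use $M(n,p)\leqslant M(p,p)\leqslant 2^{-p}$. Hence
\[
\sum_{p\geqslant 11}\widehat{Q}_p(A_n)\leqslant 4\sum_{k\geqslant 11}2^{-k}=\frac{4}{1024}=\frac{1}{256}.
\]

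Summing these contributions gives
\[
\frac34+\frac{4}{25}+\frac{4}{10000}+\frac{4}{2400}+\frac{1}{256}.
\]
Here $\frac34+\frac4{25}=0.91$, and the remaining terms total about $0.0060$. So the sum is roughly $0.916<1$, which is the required inequality.

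The main obstacle is the prime $2$. The whole argument depends on Lemma~\ref{l:alt_ev}, which removes the odd involutions (whose squared-intersection terms dominate $M(n,2)$) and so leaves enough room for the factor $4$. Given that lemma, the remaining step is a routine check that the constants fit. If the margin had been tighter, I would first sharpen the factor $4$ for odd $p$: for $p$ odd, $x^{S_n}$ splits in $A_n$ only in rare cases, so $|x^{S_n}|=|x^{A_n}|$ generally holds, and the factor reduces to $2$ or better.
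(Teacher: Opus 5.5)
Your proposal is correct and matches the paper's proof essentially step for step. You use Lemma~\ref{l:alt_4} together with Lemma~\ref{l:alt_ev} for $p=2$, and the $S_n$ bounds from Theorem~\ref{t:sym} multiplied by $4$ for odd $p$, reaching the same total $\frac34+\frac4{25}+\frac1{2500}+\frac1{600}+\frac1{256}<1$; your closing remark about sharpening the factor $4$ is not needed.
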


\begin{proof}
    With Lemmas~\ref{l:alt_4} and \ref{l:alt_ev} in hand,
    the proof is almost identical to that of Theorem~\ref{t:sym}.
    More precisely, we have
    \begin{equation*}
        \begin{aligned}
            &\widehat{Q}_2(A_n) \leqslant 4\widehat{Q}_{\mathrm{ev}}(S_n) \leqslant 4M_{\mathrm{ev}}(n) \leqslant 4M_{\mathrm{ev}}(40) < \frac{3}{4} , \\
            &\widehat{Q}_3(A_n) \leqslant 4\widehat{Q}_3(S_n) \leqslant 4M(n,3) \leqslant 4M(39,3) < \frac{4}{25} , \\
            &\widehat{Q}_5(A_n) \leqslant 4\widehat{Q}_5(S_n) \leqslant 4M(n,5) \leqslant 4M(40,5) < \frac{1}{2500} , \\
            &\widehat{Q}_7(A_n) \leqslant 4\widehat{Q}_7(S_n) \leqslant 4M(n,7) \leqslant 4M(35,7) < \frac{1}{600} ,\\
            &\sum_{p\geqslant 11}\widehat{Q}_p(A_n) \leqslant 4\sum_{p\geqslant 11}\widehat{Q}_p(S_n) \leqslant \frac{1}{256},
        \end{aligned}
    \end{equation*}
    which yields
    \begin{equation*}
        \sum_{p\in\pi(A_n)} \widehat{Q}_p(A_n) \leqslant \frac{3}{4} + \frac{4}{25} + \frac{1}{2500} +\frac{1}{600} + \frac{1}{256} < 1. \qedhere
    \end{equation*}
\end{proof}

\begin{remark}
Arguing as in \cite[Theorem~F]{BH26},
Theorems~\ref{t:sym} and \ref{t:alt} provide an independent proof of the main result in \cite{Zen14},
which asserts the existence of $x \in G$ such that $A \cap B^x = 1$,
for any pair $(A,B)$ of nilpotent subgroups of $G\in\{S_n,A_n\}$ with $n \geqslant 9$
(the cases $9 \leqslant n \leqslant 40$ can be checked in {\sc Magma} using the code in \cite{HLRS26}).
\end{remark}

\vspace{0.2cm} \noindent
 {\bfseries Acknowledgments.}
HYH was supported by the National Research, Development and Innovation Office (NKFIH) Grant No. K153681. HYH and AR would like to thank the Isaac Newton Institute for Mathematical Sciences, Cambridge, for support and hospitality during the programme \textit{Algebraic groups, geometry, invariants and related topics} (EPSRC grant EP/
Z000580/1), where part of work on this paper was undertaken. AR also acknowledges support from the Additional Funding Programme for Mathematical Sciences, delivered by EPSRC (EP/V521917/1), and the Heilbronn Institute for Mathematical Research.

\vspace{0.2cm} \noindent
 {\bfseries AI statement:}
 AI was used during the preparation of this manuscript.
 In particular, a version of Proposition~\ref{prop:translated}
 was discovered independently by HYH and AR, and by FL and LS, while using OpenAI's GPT.
 Both teams were trying to establish the odd order case.
 Similarly, we have been significantly helped with the results in Section~\ref{sec3}.
 All our results have been formalized in Lean~4 using mathlib, and the formalization is available in~\cite{HLRS26}.
 We take full responsibility for the content of the paper.

   \vspace{0.2cm}

\end{document}